\date{}
\begin{document}
\title{A Class of $J$-quasipolar Rings}
\author{M. B. Calci}
\address{Mete Burak Calci, Department of Mathematics, Ankara University,
Turkey} \email{mburakcalci@gmail.com}
\author{S. Halicioglu}
\address{Sait Halicioglu, Department of Mathematics, Ankara University,
Turkey} \email{halici@ankara.edu.tr}
\author{A. Harmanci}
\address{Abdullah Harmanci, Department of Mathematics, Hacettepe University,
Turkey} \email{harmanci@hacettepe.edu.tr}
%\begin{document}
%\maketitle
\newtheorem{thm}{Theorem}[section]
\newtheorem{lem}[thm]{Lemma}
\newtheorem{prop}[thm]{Proposition}
\newtheorem{cor}[thm]{Corollary}
\newtheorem{df}[thm]{Definition}
\newtheorem{nota}{Notation}
\newtheorem{note}[thm]{Remark}
\newtheorem{ex}[thm]{Example}
\newtheorem{exs}[thm]{Examples}
\def\theequation{1. \arabic{equation}}
\begin{abstract}  In this paper, we introduce a class of $J$-quasipolar rings.
Let $R$ be a ring with identity. An element $a$ of a ring
$R$ is called {\it weakly $J$-quasipolar} if there exists $p^2 =
p\in comm^2(a)$ such that $a + p$ or $a-p$ are contained in $J(R)$
and the ring $R$ is called {\it weakly $J$-quasipolar} if every
element of $R$ is weakly $J$-quasipolar. We give many
characterizations and investigate general properties of weakly
$J$-quasipolar rings. If $R$ is a weakly $J$-quasipolar ring, then
we show that (1) $R/J(R)$ is weakly $J$-quasipolar, (2) $R/J(R)$
is commutative, (3) $R/J(R)$ is reduced. We use weakly
$J$-quasipolar rings to obtain more results for $J$-quasipolar
rings. We prove that the class of weakly $J$-quasipolar rings lies
between the class of $J$-quasipolar rings and the class of
quasipolar rings. Among others it is shown that a ring $R$ is
abelian weakly $J$-quasipolar if and only if $R$ is uniquely
clean.  \vspace{2mm}

\noindent {\bf2010 MSC:} 16S50, 16S70, 16U99

\noindent {\bf Key words:} Quasipolar ring, $J$-quasipolar ring,
weakly $J$-quasipolar ring, uniquely clean ring, feckly reduced
ring, directly finite ring
\end{abstract}
\maketitle
\maketitle
\section{Introduction}
Throughout this paper all rings are associative with identity
unless otherwise stated. Given a ring $R$, the symbol $U(R)$ and
$J(R)$ stand for the group of units and the Jacobson radical of
$R$, respectively.

 Let $R$ be a ring and
$a\in R$. We adopt the notations $comm(a) = \{b\in R\mid ab =
ba\}$ while the {\it second commutant} and $comm^2(a) = \{b\in
R\mid bc = cb $ for all $c\in comm(a)\}$ and $R^{qnil} = \{a\in
R\mid 1 + ax$ is invertible for each $x\in comm(a)\}$. An element
$a$ of a ring $R$ is called {\it quasipolar} (see \cite{KP}) if
there exists $p^2 = p\in R$ such that $p\in comm^2(a)$, $a + p\in
U(R)$ and $ap\in R^{qnil}$. Any idempotent $p$ satisfying the
above conditions is called a {\it spectral idempotent} of $a$, and
this term is borrowed from spectral theory in Banach algebra and
it is unique for $a$. Quasipolar rings have been studied by many
ring theorists (see \cite{CC},\cite{CC1}, \cite{KP} and
\cite{YC}). Recently, $J$-quasipolar rings are introduced in
\cite{CC3}. For an element $a$ of a ring $R$, if there exists
$p^2=p\in comm^2(a)$ such that $a+p\in J(R)$, then $a$ is called
\textit{$J$-quasipolar} and a ring $R$ is called $J$-quasipolar,
if every element of $R$ is $J$-quasipolar. It is proved that every
$J$-quasipolar ring is quasipolar.

Motivated by these classes of polarity versions of rings, we
introduce weakly $J$-quasipolar rings, generalizing $J$-quasipolar
rings.  Throughout this paper, some basic properties of weakly
$J$-quasipolar ring are studied, also examples and counter
examples are given. We show that the class of weakly
$J$-quasipolar rings lies properly between the class of
$J$-quasipolar rings and the class of quasipolar rings. It is
proved that $R$ is $J$-quasipolar if and only if $R$ is weakly
$J$-quasipolar and $2\in J(R)$. Then some of the main results of
$J$-quasipolar rings are special cases of our results for this
general setting.   Given a ring $R$, if $M_n(R)$ and $T_n(R)$
denote the ring of all $n\times n$ matrices and triangular
matrices over $R$, then we investigate necessary and sufficient
conditions as to weakly $J$-quasipolarity of $T_2(R)$ over a
commutative local ring $R$. Further, it is proven that $M_n(R)$ is
not weakly $J$-quasipolar for $n\geq 2$. Finally, we determine
under what conditions a $2\times 2$ matrix over a commutative
local ring is weakly $J$-quasipolar.

In what follows, $\Bbb{N}$ and $\Bbb{Z}$ denote the set of natural
numbers, the ring of integers and for a positive integer $n$,
$\Bbb{Z}_n$ is the ring of integers modulo $n$. The notations
$detA$ and $trA$ denote the determinant and the trace of a square
matrix $A$ over a commutative ring and $I_n$ denotes the $n\times
n$ identity matrix.

\section{Weakly $J$-Quasipolar Rings}

In this section, we introduce a class of quasipolar rings
 which  is a generalization of $J$-quasipolar rings.
 By using weakly
$J$-quasipolar rings, we obtain more results for $J$-quasipolar
rings. It is clear that every $J$-quasipolar ring is weakly
$J$-quasipolar and we supply an example to show that the converse
does not hold in general (see Example \ref{akornek}). Moreover, it
is shown that the class of weakly $J$-quasipolar rings  lies
strictly between the class of $J$-quasipolar rings and the class
of quasipolar rings (see Example \ref{akornek}, Corollary
\ref{cor.qua.1} and Example \ref{hoca}). We investigate general
properties of weakly $J$-quasipolar rings.

\begin{df}{\rm Let $R$ be a ring and $a\in R$. The element $a$ is called {\it
weakly $J$-quasipolar} if there exists $p^2 = p\in comm^2(a)$ such
that $a + p\in J(R)$ or $a-p\in J(R)$. The idempotent which
satisfies the above condition is called a {\it weakly J-spectral
idempotent} and $R$ is called {\it weakly $J$-quasipolar} if every
element of $R$ is weakly $J$-quasipolar.}
\end{df}

 Lemma \ref{mete} shows that weakly $J$-quasipolar elements and
 rings are abundant.

\begin{lem}\label{mete} Let $R$ be a ring. Then we have the followings.
\begin{enumerate}
\item Every idempotent  in $R$
is weakly $J$-quasipolar.
\item An element $a\in R$ is weakly $J$-quasipolar if and only if $-a\in R$ is weakly
$J$-quasipolar.
\item Every element in $J(R)$ is weakly $J$-quasipolar.
\item Boolean rings are weakly $J$-quasipolar.
\item $J$-quasipolar rings are weakly $J$-quasipolar.
\item Every uniquely clean ring is weakly $J$-quasipolar.
\end{enumerate}
\end{lem}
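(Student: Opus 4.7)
The plan is to dispatch items (1)--(5) by exhibiting explicit choices of the idempotent $p$ from the definition, and then handle (6), which is the substantive one, using the structure theory of uniquely clean rings together with lifting of idempotents modulo $J(R)$.

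For (1), given an idempotent $e$, I would take $p=e$; then $p\in comm^2(e)$ trivially (the second commutant always contains the element itself because it commutes with every member of its own commutant), and $e-p=0\in J(R)$. For (2), I would observe that $comm(a)=comm(-a)$, hence $comm^2(a)=comm^2(-a)$, and $(-a)-p=-(a+p)$, $(-a)+p=-(a-p)$, so the two witnesses swap roles. For (3), if $a\in J(R)$ take $p=0$; the conditions on $p$ are vacuous and $a+p=a\in J(R)$. Part (4) follows from (1) since every element of a Boolean ring is idempotent, and (5) is immediate from the definitions of $J$-quasipolar and weakly $J$-quasipolar (any $J$-quasipolar witness is automatically a weakly $J$-quasipolar witness).

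The real content is (6). The plan is to invoke the Nicholson--Zhou characterisation: a ring $R$ is uniquely clean if and only if $R/J(R)$ is Boolean, idempotents lift modulo $J(R)$, and $R$ is abelian (every idempotent is central). Given $a\in R$, since $R/J(R)$ is Boolean the image $\bar a$ is idempotent, so $a^2-a\in J(R)$. Using the idempotent-lifting property, I would produce $e^2=e\in R$ with $e-a\in J(R)$, equivalently $a-e\in J(R)$. It remains to verify $e\in comm^2(a)$, and this is where I expect to be careful: because $R$ is abelian, $e$ is central, and any central element lies in $comm^2(a)$ for every $a$ (a central element commutes with all of $R$, in particular with $comm(a)$). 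Setting $p=e$ then gives $p^2=p\in comm^2(a)$ with $a-p\in J(R)$, so $a$ is weakly $J$-quasipolar.

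The only potential obstacle is the second-commutant requirement in (6); without abelianness one would only know $e\in comm(a)$ (since $e-a\in J(R)$ and one can arrange commutation during the lifting), which is not enough. Exploiting that uniquely clean rings are abelian bypasses this issue cleanly, so the proof should go through without further complication.
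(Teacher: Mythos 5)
Your proof is correct, and it follows exactly the route the paper intends: the paper states this lemma without proof as a collection of routine observations, and for item (6) your argument via Nicholson--Zhou (idempotents of a uniquely clean ring are central, and each $a$ admits an idempotent $e$ with $a-e\in J(R)$) is precisely the machinery the paper itself invokes later in the proof of its Theorem \ref{uniquely.clean}. The explicit witnesses you give for (1)--(5) and the observation that centrality of $e$ settles the $comm^2(a)$ requirement in (6) are all accurate.
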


In the sequel,  we state elementary properties of weakly
$J$-quasipolar elements and weakly $J$-quasipolar rings.

\begin{lem}\label{uniqidempotent}
Let $R$ be a ring. If $u\in U(R)$ is weakly $J$-quasipolar, then
$1$ is the weakly $J$-spectral idempotent of u.
\end{lem}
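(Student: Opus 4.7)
The plan is to show that any weakly $J$-spectral idempotent $p$ of the unit $u$ must coincide with $1$. Write down the hypothesis: there is $p^2 = p \in comm^2(u)$ with $u + p \in J(R)$ or $u - p \in J(R)$. My first observation is that $comm^2(u) \subseteq comm(u)$ (since $u \in comm(u)$ trivially, any element of the second commutant must in particular commute with $u$), so $p$ commutes with $u$.

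The main computational step is to multiply the relation $u \pm p \in J(R)$ on the right by the complementary idempotent $1-p$. Using $p(1-p) = 0$, I obtain in either case
\[
(u \pm p)(1-p) \;=\; u(1-p) \pm p(1-p) \;=\; u(1-p).
\]
Since $J(R)$ is a two-sided ideal, the left-hand side lies in $J(R)$, hence $u(1-p) \in J(R)$. Now I use that $u$ is a unit: multiplying on the left by $u^{-1}$ (and again invoking that $J(R)$ is a left ideal) gives $1-p \in J(R)$.

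Finally, $1-p$ is an idempotent in $J(R)$, and the only idempotent in the Jacobson radical is $0$; therefore $1-p = 0$, i.e.\ $p = 1$. This shows that $1$ is (the unique) weakly $J$-spectral idempotent of $u$.

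There is no serious obstacle here: the proof is a short manipulation. The only points requiring mild care are recording that elements of $comm^2(u)$ automatically commute with $u$, and invoking the standard fact that $J(R)$ contains no nonzero idempotents. Both are routine, and the argument treats the two cases $u+p \in J(R)$ and $u-p \in J(R)$ uniformly.
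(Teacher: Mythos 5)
Your proof is correct: multiplying $u\pm p$ on the right by $1-p$ kills the idempotent term, giving $u(1-p)\in J(R)$, hence $1-p\in J(R)$, and the only idempotent in the Jacobson radical is $0$. This is essentially the same one-line manipulation as the paper's, which instead multiplies by $u^{-1}$ to get $1-u^{-1}p\in J(R)$ and concludes that $p$ is a unit idempotent, hence equal to $1$; the two endgames (idempotent in $J(R)$ is $0$ versus invertible idempotent is $1$) are interchangeable, and your preliminary remark that $p$ commutes with $u$ is harmless but not actually needed for your computation.
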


\begin{proof}
Let $u\in U(R)$ be weakly $J$-quasipolar, so $u+p\in J(R)$ or
$u-p\in J(R)$ such that $p^2=p\in comm^2(u)$. If $u-p\in J(R)$,
then $u^{-1}u-u^{-1}p=1-u^{-1}p\in J(R)$. Hence, $u^{-1}p\in U(R)$
and so $p\in U(R)$. Thus, we have $p=1$. In case $u+p \in J(R)$,
the proof is similar.
\end{proof}

 By using the concept of $J$-quasipolarity, we obtain a characterization for local rings.

\begin{prop}
Let $R$ be a weakly $J$-quasipolar ring. Then $R$ is a local ring
if and only if $R$ has only trivial idempotents.
\end{prop}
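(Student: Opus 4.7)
The forward direction is standard and requires no use of the weakly $J$-quasipolar hypothesis: in any local ring the only idempotents are $0$ and $1$, since if $p^2=p$ then either $p$ or $1-p$ is a unit, forcing $p\in\{0,1\}$.

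For the reverse direction, I will show that the hypothesis forces every element of $R$ to be either a unit or in $J(R)$, which characterizes local rings. The plan is to take an arbitrary $a\in R$ and apply the weakly $J$-quasipolar assumption: there exists an idempotent $p\in comm^2(a)$ with $a+p\in J(R)$ or $a-p\in J(R)$. The assumption on idempotents collapses the analysis into exactly two cases.

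If $p=0$, then $a\in J(R)$ or $-a\in J(R)$; either way $a\in J(R)$, since $J(R)$ is closed under taking negatives. If $p=1$, then $a+1\in J(R)$ or $a-1\in J(R)$; in the first case $a=-1+j$ for some $j\in J(R)$, and in the second case $a=1+j$ for some $j\in J(R)$. Both $\pm 1+j$ are units of $R$, because $J(R)$ is a quasi-regular ideal. Hence every element of $R$ is either a unit or lies in $J(R)$, which is equivalent to $R$ being local.

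There is no real obstacle here; the only thing to watch is making sure that both sign choices in the definition of weakly $J$-quasipolar are treated, but in each case the conclusion is immediate because $J(R)$ is symmetric under negation and $\pm 1+J(R)\subseteq U(R)$. No use of $comm^2(a)$ is needed beyond the fact that a trivial idempotent is automatically in the double commutant.
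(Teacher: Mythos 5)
Your proof is correct and follows essentially the same route as the paper: using the triviality of idempotents to reduce the weakly $J$-quasipolar decomposition to the cases $a\in J(R)$, $a+1\in J(R)$, or $a-1\in J(R)$, and concluding that every element is either a unit or in $J(R)$. No issues.
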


\begin{proof}
Assume that $R$ is a weakly $J$-quasipolar ring and has only
trivial idempotents. Let $a\in R$, so $a+1\in J(R)$ or $a-1\in
J(R)$ or $a\in J(R)$. If $a+1\in J(R)$ or $a-1\in J(R)$, then
$a\in U(R)$. In the last condition, $a\in J(R)$. Consequently, $R$
is a local ring. The converse statement is clear.
\end{proof}

\begin{lem}\label{benzer.eleman}
Let $R$ be a ring. If $a\in R$ and $u\in U(R)$,  then $a$ is
weakly $J$-quasipolar if and only if $u^{-1}au$ is weakly
$J$-quasipolar.
\end{lem}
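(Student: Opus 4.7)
The plan is to prove one direction (namely, that weakly $J$-quasipolarity of $a$ implies that of $u^{-1}au$) and then invoke symmetry. Indeed, if we establish the forward implication for every unit, then applying it to the element $u^{-1}au$ and the unit $u^{-1}$ gives $(u^{-1})^{-1}(u^{-1}au)u^{-1} = a$, which yields the converse automatically.

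For the forward direction, suppose $a$ admits a weakly $J$-spectral idempotent $p$, so $p^2 = p \in \mathrm{comm}^2(a)$ and $a+p \in J(R)$ or $a-p \in J(R)$. The natural candidate for a weakly $J$-spectral idempotent of $u^{-1}au$ is $q := u^{-1}pu$. It is immediate that $q^2 = u^{-1}p^2u = u^{-1}pu = q$. The equality $u^{-1}au + q = u^{-1}(a+p)u$ (and similarly with a minus sign) places this element in $J(R)$, since $J(R)$ is a two-sided ideal and hence closed under conjugation by units.

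The only non-trivial step is verifying that $q \in \mathrm{comm}^2(u^{-1}au)$. For this, take any $c \in \mathrm{comm}(u^{-1}au)$, so that $c(u^{-1}au) = (u^{-1}au)c$. Conjugating by $u$ shows $(ucu^{-1})a = a(ucu^{-1})$, so $ucu^{-1} \in \mathrm{comm}(a)$. Because $p \in \mathrm{comm}^2(a)$, we obtain $p(ucu^{-1}) = (ucu^{-1})p$; conjugating this back by $u^{-1}$ gives $(u^{-1}pu)c = c(u^{-1}pu)$, i.e.\ $qc = cq$. Since $c$ was arbitrary in $\mathrm{comm}(u^{-1}au)$, we conclude $q \in \mathrm{comm}^2(u^{-1}au)$.

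The main (mild) obstacle is bookkeeping the $\mathrm{comm}^2$ condition through the conjugation, and this is handled by the back-and-forth argument above. Everything else is formal: idempotency is preserved by conjugation, and $J(R)$ being an ideal absorbs the conjugation of $a \pm p$.
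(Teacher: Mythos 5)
Your proof is correct and follows essentially the same route as the paper: conjugate the weakly $J$-spectral idempotent to $q=u^{-1}pu$, use that $J(R)$ is an ideal, and verify $q\in comm^2(u^{-1}au)$ by the same back-and-forth conjugation of elements of $comm(u^{-1}au)$. Your handling of the converse by applying the forward direction to $u^{-1}au$ with the unit $u^{-1}$ is a slightly cleaner packaging than the paper's explicit reversal, but it is the same argument in substance.
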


\begin{proof}
Assume that $a$ is weakly $J$-quasipolar. Then there exists
$p^2=p\in comm^2(a)$ such that $a-p\in J(R)$. If $q$ is taken as
$q=u^{-1}pu$, then $q^2=q\in R$ and
$u^{-1}au-u^{-1}pu=u^{-1}(a-p)u\in J(R)$. Let $b\in
comm(u^{-1}au)$, then $(u^{-1}au)b=b(u^{-1}au)$ and so
$a(ubu^{-1})=(ubu^{-1})a$. Thus $ubu^{-1}\in comm(a)$. Since $p\in
comm^2(a)$, $(ubu^{-1})p=p(ubu^{-1})$. Hence
$b(u^{-1}pu)=(u^{-1}pu)b$ and $pb=bp$. Consequently, $p\in
comm^2(u^{-1}au)$ and so $u^{-1}au$ is weakly $J$-quasipolar.
Conversely, assume that $u^{-1}au-p\in J(R)$, so $a-uqu^{-1}\in
J(R)$. Also $(uqu^{-1})^2=uqu^{-1}\in comm^2(a)$. If $a+p\in
J(R)$, then proof is similar.
\end{proof}

The proof of  Lemma \ref{benzer.eleman} reveals that $p$ is weakly
$J$-spectral idempotent of $a$ if and only if $u^{-1}pu$ is the
weakly $J$-spectral idempotent of $u^{-1}au$. We need the
following lemma in order to prove Theorem \ref{eRe}.

\begin{lem}\label{burcu}
Let $R$ be a ring. If $a=j_1-p\in J(R)$ or $a=j_2+p\in J(R)$ is
weakly $J$-quasipolar decomposition of $a$ in $R$, then
$ann_l(a)\subseteq ann_l(p)$ and $ann_r(a)\subseteq ann_r(p)$.
\end{lem}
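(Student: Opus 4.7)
The plan is to exploit the fact that $p$ commutes with $a$ (which follows from $p \in comm^2(a)$ together with the trivial observation $a \in comm(a)$), and then to use that $1 + j$ is a unit whenever $j \in J(R)$. I will handle the case $a - p = j_1 \in J(R)$ first, writing $a = p + j_1$; the case $a + p = j_2 \in J(R)$ will be symmetric, with $1 - j_2$ (or equivalently $-1 + j_2$) playing the role of the unit.

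For the inclusion $\mathrm{ann}_l(a) \subseteq \mathrm{ann}_l(p)$, I would take $x \in R$ with $xa = 0$ and multiply on the right by $p$ to obtain $xap = 0$. Because $p \in comm^2(a)$ commutes with $a$, this equals $xpa = 0$. Substituting $a = p + j_1$ and using $p^2 = p$ gives
\begin{equation*}
0 = xpa = xp(p + j_1) = xp + xp\, j_1 = xp\,(1 + j_1).
\end{equation*}
Since $j_1 \in J(R)$, the element $1 + j_1$ is a unit of $R$, so $xp = 0$, as required. For $\mathrm{ann}_r(a) \subseteq \mathrm{ann}_r(p)$, one multiplies $ax = 0$ on the left by $p$ and uses $pa = ap$ to land in the mirror equation $(1 + j_1)\,px = 0$, which forces $px = 0$.

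For the other case, $a = -p + j_2$ with $j_2 \in J(R)$, the same manipulation yields $xp\,(-1 + j_2) = 0$ (resp.\ $(-1 + j_2)\,px = 0$), and since $-1 + j_2 = -(1 - j_2)$ is again a unit, the conclusion follows identically.

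I do not expect any real obstacle: the two ingredients -- commutativity of $p$ with $a$ coming from $comm^2(a)$, and the classical fact $1 + J(R) \subseteq U(R)$ -- essentially force the computation. The only small thing to be careful about is to avoid inadvertently requiring $p$ to commute with the annihilating element $x$, which is why the argument must go through $xap = xpa$ (and $pax = apx$) rather than trying to commute $x$ past $p$ directly.
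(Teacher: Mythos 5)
Your proof is correct and follows essentially the same route as the paper's: both arguments multiply the annihilation relation by $p$, use $ap=pa$ (coming from $p\in comm^2(a)$) to rearrange, and reduce to an equation of the form $xp\,u=0$ with $u\in \pm 1+J(R)$ a unit. The only cosmetic difference is that the paper first isolates $rp=rj_1$ and then multiplies by $p$ again, whereas you substitute $a=\pm p+j$ directly; the two computations are interchangeable.
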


\begin{proof}
If $r\in ann_l(a)$, then $ra=0$. Assume that $a+p=j_1\in J(R)$
such that $p^2=p\in comm^2(a)$ and also $ap=pa$. Then
$rp=r(j_1-a)=rj_1$ and so $rp=rj_1p=rpj_1$. Hence
$rp(1-j_1)=rp-rpj_1=0$. Since $1-j_1\in U(R)$, $r\in ann_l(p)$. If
$r\in ann_r(a)$, then $ar=0$. Thus $pr=(j_1-a)r=j_1r$ and so
$pr=pj_1r=j_1pr$. Also $(1-j_1)pr=pr-j_1pr=0$. Because of
$1-j_1\in U(R)$, $r\in ann_r(p)$. If $a-p=j_2\in J(R)$ such that
$p^2=p\in comm^2(a)$, then the proof is similar to above.
\end{proof}

\begin{thm}\label{eRe}
If $R$ is weakly $J$-quasipolar, then so is $fRf$ for all
$f^2=f\in R$.
\end{thm}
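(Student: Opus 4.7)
The plan is to take an arbitrary element $a \in fRf$ and exhibit a weakly $J$-spectral idempotent for it that sits inside the corner ring. Since $R$ itself is weakly $J$-quasipolar, there exists $p^2 = p \in comm^2(a)$ (the second commutant computed in $R$) with $a + p \in J(R)$ or $a - p \in J(R)$. The whole task reduces to showing that this same $p$ serves as a weakly $J$-spectral idempotent for $a$ viewed in $fRf$.

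The first step is to check that $p$ actually lies in $fRf$. Since $a = faf$, we have $(1-f)a = 0 = a(1-f)$, so $1-f$ belongs to both $ann_l(a)$ and $ann_r(a)$. By Lemma \ref{burcu}, this forces $(1-f)p = 0$ and $p(1-f) = 0$. Hence $p = fp = pf = fpf \in fRf$. This is the crucial use of the preceding lemma.

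Next I would verify that $p$ lies in the second commutant of $a$ computed inside $fRf$. Given any $b \in fRf$ with $ab = ba$, we have $b \in comm(a)$ inside $R$, so $p \in comm^2(a)$ yields $pb = bp$; since $p$ already lives in $fRf$, this is enough. For the Jacobson-radical condition, I would invoke the standard identification $J(fRf) = fJ(R)f$: because $a$ and $p$ both belong to $fRf$, we have $a \pm p = f(a \pm p)f \in fJ(R)f = J(fRf)$, so whichever of $a+p, a-p$ lay in $J(R)$ now lies in $J(fRf)$.

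I expect the main subtlety to be precisely the step that confines $p$ to $fRf$ via Lemma \ref{burcu}; without it, one would have to work with the truncation $fpf$ and separately check that it remains in the second commutant of $a$ and that $a \pm fpf$ still lies in $J(R)$, which is much less transparent. Everything else is bookkeeping once $p$ is known to be an element of $fRf$.
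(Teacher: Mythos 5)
Your proposal is correct and follows essentially the same route as the paper's proof: both use Lemma \ref{burcu} applied to $1-f\in ann_l(a)\cap ann_r(a)$ to force $p=fpf\in fRf$, then verify the second-commutant condition inside $fRf$ and invoke $J(fRf)=fJ(R)f$. Your write-up is, if anything, a bit more explicit about why $pb=bp$ for $b\in comm_{fRf}(a)$ than the paper's one-line remark.
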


\begin{proof}
For every $a\in fRf$ there exists  $p\in comm^2(a)$ such that
$a-p\in J(R)$ or $a+p\in J(R)$. Let $a+p=j_1\in J(R)$ or
$a-p=j_2\in J(R)$. By Lemma \ref{burcu}, we have $1-f\in
ann_l(a)\cap ann_r(a)\subseteq ann_l(p)\cap ann_r(p)=R(1-p)\cap
(1-p)R=(1-p)R(1-p)$. Then $pf=p=fp$ and so $a=fjf-fpf$,
$(fpf)^2=fpf$ and $fjf\in fJ(R)f=J(fRf)$. Lastly, let $xa=ax$ and
$x\in fRf$, so $x (fpf)=(fpf)x$. If $a-p=j_2\in J(R)$, then proof
is similar. Consequently, $a$ is weakly $J$-quasipolar in $fRf$.
\end{proof}

By the definition of weakly $J$-quasipolar rings, it is clear that
every $J$-quasipolar ring is weakly $J$-quasipolar. We now
investigate under what condition a weakly $J$-quasipolar ring is
$J$-quasipolar.

\begin{prop}\label{2injr}
A ring $R$ is $J$-quasipolar if and only if $R$ is weakly
$J$-quasipolar and $2\in J(R)$.
\end{prop}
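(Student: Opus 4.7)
The plan is to prove the two implications separately; both are short. For the forward implication, $J$-quasipolar trivially gives weakly $J$-quasipolar since the condition $a+p\in J(R)$ is subsumed by the disjunction $a+p\in J(R)$ or $a-p\in J(R)$. To extract $2\in J(R)$ I would apply the $J$-quasipolarity hypothesis to the unit $a=1$. This yields an idempotent $p\in comm^2(1)$ with $1+p\in J(R)$; then $-p=1-(1+p)\in 1+J(R)\subseteq U(R)$, so $p$ is a unit idempotent, which combined with $p^2=p$ forces $p=1$, and therefore $2=1+p\in J(R)$.

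For the reverse implication, I would assume $R$ is weakly $J$-quasipolar and $2\in J(R)$. Given $a\in R$, pick a weakly $J$-spectral idempotent $p^2=p\in comm^2(a)$. If $a+p\in J(R)$ then $a$ is already $J$-quasipolar with spectral idempotent $p$. If instead $a-p\in J(R)$, I would invoke the identity $a+p=(a-p)+2p$; since $J(R)$ is a two-sided ideal and $2\in J(R)$, both $a-p$ and $2p$ lie in $J(R)$, hence $a+p\in J(R)$. Thus $a$ is $J$-quasipolar in every case, with the same idempotent $p\in comm^2(a)$ serving as its $J$-spectral idempotent.

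I do not foresee any genuine obstacle. The only conceptual point is recognising that under $2\in J(R)$ the two alternatives in the weakly $J$-quasipolar definition collapse, because $p$ and $-p$ differ by $2p\in J(R)$, which absorbs the sign ambiguity built into the weaker definition; the rest is bookkeeping with $J(R)$ being closed under addition and absorbing multiplication.
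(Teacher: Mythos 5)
Your proof is correct and follows essentially the same route as the paper: the key step in both is the identity $a+p=(a-p)+2p\in J(R)$ when $2\in J(R)$, collapsing the two alternatives. The paper dismisses the converse as ``clear,'' whereas you usefully spell out the extraction of $2\in J(R)$ by applying $J$-quasipolarity to the unit $1$; that detail is sound and consistent with the paper's Lemma \ref{uniqidempotent}.
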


\begin{proof}
Let $R$ be a weakly $J$-quasipolar ring and $2\in J(R)$. If
$a+p\in J(R)$ such that $p^2=p\in comm^2(a)$, then it is clear.
Let $a-p\in J(R)$ and $p^2=p\in comm^2(a)$. Since $2\in J(R)$,
$a-p+2p\in J(R)$ and so $a$ is $J$-quasipolar. The converse is
clear.
\end{proof}

The next example illustrates that there are weakly $J$-quasipolar
rings but not $J$-quasipolar.

\begin{ex}\label{akornek} The ring $\Bbb Z_6$ is weakly $J$-quasipolar but not
$J$-quasipolar.
\end{ex}

\begin{proof}  It is obvious that $\Bbb Z_6$ is weakly
$J$-quasipolar. Since $1+1\notin J(\Bbb Z_6)=0$,  by Proposition
\ref{2injr}, $\Bbb Z_6$ is  not $J$-quasipolar.
\end{proof}

In \cite{CC3}, it is shown that every $J$-quasipolar element is
quasipolar. We obtain the following result for this general
setting.

\begin{prop}\label{quasipolar1}
Every weakly $J$-quasipolar element in a ring $R$ is quasipolar.
\end{prop}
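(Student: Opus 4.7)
The plan is to produce, for each weakly $J$-quasipolar element $a$, an explicit idempotent that witnesses quasipolarity. Given $p^2 = p \in comm^2(a)$ with either $a+p \in J(R)$ or $a-p \in J(R)$, I would take the candidate spectral idempotent of $a$ to be $q := 1-p$. Clearly $q^2 = q$, and since $p \in comm^2(a)$, so is $1-p$; this disposes of the commutation requirement.

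Next I would verify the unit condition $a + q \in U(R)$, handling the two cases separately. Case~2 ($a - p =: j \in J(R)$) is immediate, because $a + q = a + 1 - p = 1 + j \in 1 + J(R) \subseteq U(R)$. Case~1 ($a + p =: j \in J(R)$) uses the fact that $1 - 2p$ is a unit, in fact its own inverse, since $(1-2p)^2 = 1 - 4p + 4p^2 = 1$. Rewriting $a + q = a + 1 - p = (1-2p) + (a+p) = (1-2p) + j$ exhibits $a+q$ as the sum of a unit and an element of $J(R)$, hence a unit.

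For the quasinilpotence condition $aq \in R^{qnil}$, I would simply show $aq \in J(R)$ and use that every element of $J(R)$ lies in $R^{qnil}$ (indeed, if $y \in J(R)$ then $1 + yx \in U(R)$ for \emph{every} $x \in R$, in particular for every $x \in comm(y)$). In Case~1 write $a = -p + j$, and in Case~2 write $a = p + j$; since $p$ commutes with $a$, in both cases
\[
aq \;=\; a(1-p) \;=\; (\mp p + j)(1-p) \;=\; \mp p(1-p) + j(1-p) \;=\; j(1-p) \;\in\; J(R).
\]

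The argument is essentially a single computation once the right candidate is chosen, so the only real obstacle is conceptual: one must notice that the spectral idempotent should be $1-p$, not $p$ itself. After that, the verification reduces to the two short calculations above, and the sign flip in the hypothesis $a \pm p \in J(R)$ is absorbed cleanly into the rewriting $a = \mp p + j$.
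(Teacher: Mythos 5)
Your proposal is correct and follows essentially the same route as the paper: the spectral idempotent is taken to be $q=1-p$, the unit condition comes from writing $a+q$ as a unit plus an element of $J(R)$, and $aq=j(1-p)\in J(R)\subseteq R^{qnil}$. The only difference is that for the case $a+p\in J(R)$ the paper simply cites \cite[Proposition 2.4]{CC3}, whereas you carry out that case explicitly via the involution $1-2p$, which makes the argument self-contained but is not a different method.
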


\begin{proof}
Let $a\in R$ be weakly $J$-quasipolar. Then there exists $p^2=p\in
comm^2(a)$ such that $a+p\in J(R)$ or $a-p\in J(R)$. If $a+p\in
J(R)$, then $a$ is quasipolar from \cite[Proposition 2.4]{CC3}. If
$a-p\in J(R)$ such that $p^2=p\in comm^2(a)$, then $a+(1-p)\in
U(R)$ and also $(a-p)(1-p)=a(1-p)\in J(R)\subseteq R^{qnil}$.
Therefore $a$ is a quasipolar element.
\end{proof}

\begin{cor}\label{cor.qua.1}
If $R$ is weakly $J$-quasipolar, then it is quasipolar.
\end{cor}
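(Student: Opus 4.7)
The plan is almost immediate from the preceding proposition. Since Proposition \ref{quasipolar1} establishes the element-wise implication that every weakly $J$-quasipolar element is quasipolar, I would simply apply this element-wise result to every $a \in R$.

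More precisely, I would start by unpacking the hypothesis: $R$ is weakly $J$-quasipolar means that every $a \in R$ is a weakly $J$-quasipolar element. Then I would invoke Proposition \ref{quasipolar1} on each such $a$, concluding that every $a \in R$ is quasipolar. Finally, since this is exactly the definition of $R$ being a quasipolar ring, the corollary follows.

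There is no real obstacle here; the whole content of the corollary is the translation of an element-wise property to a ring-wise property, which is essentially a one-line quantifier argument. The substantive work has already been carried out in Proposition \ref{quasipolar1}, where both cases $a+p \in J(R)$ and $a-p \in J(R)$ were handled (the second case using that $1-p$ is also idempotent and lies in $comm^2(a)$, together with $J(R) \subseteq R^{qnil}$). So my proof would consist of a single sentence citing Proposition \ref{quasipolar1}.
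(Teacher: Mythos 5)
Your proposal is correct and is exactly how the paper obtains this corollary: the paper gives no separate proof, treating it as the immediate ring-wise consequence of Proposition \ref{quasipolar1}. Nothing further is needed.
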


The converse statement of Corollary \ref{cor.qua.1} is not true in
general, i.e., there are quasipolar rings but not weakly
$J$-quasipolar.

\begin{ex}\label{hoca}
Let $R=\Bbb Z_{(5)}$ be the localization ring of $\Bbb Z$ at the
prime $5$. Then $R$ is a local ring and thus quasipolar by \cite
[Corollary 3.3]{YC}. Since $\frac{1}{3}\in \Bbb Z_{(5)}$ is not
weakly $J$-quasipolar,  $\Bbb Z_{(5)}$ is not weakly
$J$-quasipolar. \end{ex}

By Example \ref{akornek}, Corollary \ref{cor.qua.1} and Example
\ref{hoca}, it is clear that the class of weakly $J$-quasipolar
rings  lies strictly between the class of $J$-quasipolar rings and
the class of quasipolar rings.

\begin{prop}\label{idem.teklik}
Any weakly $J$-quasipolar element $a\in R$ has a unique weakly
$J$-spectral idempotent.
\end{prop}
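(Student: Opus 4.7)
The plan is to show that any two weakly $J$-spectral idempotents of $a$ must coincide by reducing every case to the relation $p - q \in J(R)$, and then using that for commuting idempotents this forces $p = q$.

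First I would set up the situation: suppose $p$ and $q$ are both weakly $J$-spectral idempotents of $a$. Since $p \in comm^2(a)$ and $q \in comm^2(a) \subseteq comm(a)$, the two idempotents commute, $pq = qp$. A short direct expansion using $pq = qp$ gives $(p-q)^2 = p + q - 2pq$ and $(p-q)^3 = p - q$, which is the key algebraic identity I will use at the end.

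Next I would split into the sign cases arising from the definition. In the two ``matching'' cases $a + p,\, a + q \in J(R)$ or $a - p,\, a - q \in J(R)$, simply subtracting yields $p - q \in J(R)$ immediately. The main obstacle is the ``mixed'' case, for instance $a + p \in J(R)$ and $a - q \in J(R)$: here the naive subtraction gives $p + q \in J(R)$, not $p - q$. To get around this I would multiply $p + q \in J(R)$ on the left by $p$ and by $q$ separately, using that $J(R)$ is a two-sided ideal, to obtain $p + pq \in J(R)$ and $pq + q \in J(R)$. Subtracting these (and using $pq = qp$) again yields $p - q \in J(R)$. The remaining mixed case is symmetric.

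Finally, from $p - q \in J(R)$ and the identity $(p-q)^3 = p - q$, I would rewrite $(p-q)\bigl(1 - (p-q)^2\bigr) = 0$. Since $p - q \in J(R)$ implies $(p-q)^2 \in J(R)$, the factor $1 - (p-q)^2$ is a unit, and therefore $p - q = 0$, i.e., $p = q$. The only subtle point is handling the mixed-sign case, but the ideal property of $J(R)$ together with commutativity of $p$ and $q$ resolves it cleanly.
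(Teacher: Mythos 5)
Your proof is correct, and it takes a genuinely different route from the paper's. The paper reduces the statement to known uniqueness results for quasipolar elements: via Proposition \ref{quasipolar1} it identifies $1-p$ as the spectral idempotent of $a$ or of $-a$ (depending on the sign in the decomposition), then invokes the uniqueness of spectral idempotents from Koliha--Patricio together with the fact, quoted from \cite{CC3}, that $a$ and $-a$ share the same spectral idempotent. Your argument is instead entirely self-contained: you observe that $p,q\in comm^2(a)\subseteq comm(a)$ forces $pq=qp$, reduce every sign case to $p-q\in J(R)$ (the mixed case handled correctly by multiplying $p+q\in J(R)$ by $p$ and by $q$ and subtracting), and then kill $p-q$ with the identity $(p-q)^3=p-q$ for commuting idempotents, since $x\in J(R)$ with $x(1-x^2)=0$ forces $x=0$. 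What your approach buys is independence from the external machinery (no appeal to $R^{qnil}$, spectral idempotents, or the cited results of \cite{KP} and \cite{CC3}); what the paper's approach buys is brevity and an explicit link between weakly $J$-spectral idempotents and the classical spectral idempotent, which it reuses elsewhere. One small point worth noting in your write-up: in the mixed case your computation in fact yields both $p=q$ and $2p\in J(R)$, which is consistent and causes no problem, but you could remark that the mixed case can only occur for rather special elements.
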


\begin{proof}
Assume that $p, q$ are weakly $J$-spectral idempotents of $a \in
R$.\\
\textbf{Case 1:} If $a+p \in J(R)$ and $a+q \in J(R)$, then $1-p$
and $1-q$ are spectral idempotents of $-a$ by the proof of
Proposition \ref{quasipolar1}. By \cite{CC3}, the spectral
idempotent of $a$ and $-a$ is equal. Also by \cite[Proposition 2.3
]{KP}, the spectral idempotent of $a$ is unique, so we obtain that
$1-p =
1-q$. Then p = q.\\
\textbf{Case 2:} Assume that $a+p \in J(R)$ and $a-q \in J(R)$.
Then $1-p$ is spectral idempotent of $-a$ and $1-q$ is spectral
idempotent of
$a$. The remaining proof is similar to Case 1.\\
\textbf{Case 3:} Assume that $a-p \in J(R)$ and $a+q \in J(R)$,
then
similarly $p=q$.\\
\textbf{Case 4:} Assume that $a-p \in J(R)$ and $a-q \in J(R)$,
then similarly $p=q$.
\end{proof}

In \cite{Ch}, an element of a ring is called {\it strongly
$J$-clean } provided that it can be written as the sum of an
idempotent and an element in its Jacobson radical that commute. A
ring is {\it strongly $J$-clean} in case each of its elements is
strongly $J$-clean. From the definition of a strongly $J$-clean
ring, one may suspects that every weakly $J$-quasipolar ring is
strongly $J$-clean. But the following example erases possibility.

\begin{ex} It is clear that the ring $\Bbb Z_3$ is weakly
$J$-quasipolar. Since $2-1\notin J(\Bbb Z_3)$, it is not strongly
$J$-clean.
\end{ex}

Recall that, a ring R is called {\it periodic} if for each $x \in
R$, there exists distinct positive integers $m,n$ depending on x,
for which $x^n = x^m$.  For an easy reference, we mention Lemma
\ref{jacobsonthm} which is one of Jacobson's theorem given in
\cite{Jac} relating to periodicity and commutativity of the rings.

\begin{lem}\label{jacobsonthm}
Let $R$ be a ring in which for every $a \in R$ there exists an
integer $n(a) > 1$, depending on a such that $a^{n(a)} = a$,  then
$R$ is commutative.
\end{lem}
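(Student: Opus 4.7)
The plan is to reduce to the subdirectly irreducible case via Birkhoff's theorem, show that each such quotient is a finite field, and conclude that $R$ is commutative as a subdirect product of fields.

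I would start with two preparatory reductions. First, $R$ has no nonzero nilpotent elements: if $a^k=0$ and $a=a^{n(a)}$ with $n(a)\ge 2$, then iterating yields $a=a^{n(a)^m}$ for every $m$, and choosing $m$ with $n(a)^m\ge k$ forces $a=0$. Second, $J(R)=0$: for $x\in J(R)$ the relation $x\bigl(1-x^{n(x)-1}\bigr)=0$, together with the fact that $1-x^{n(x)-1}\in U(R)$ (since $x^{n(x)-1}\in J(R)$), immediately gives $x=0$.

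Since the hypothesis $a=a^{n(a)}$ is preserved by ring homomorphisms, every subdirectly irreducible quotient of $R$ again satisfies it. By Birkhoff's subdirect representation theorem it suffices to treat the case when $R$ itself is subdirectly irreducible. Combined with $J(R)=0$ and standard structure theory, such an $R$ is primitive; then the Jacobson density theorem together with the absence of nonzero nilpotents forces $R$ to be a division ring, because a faithful dense action on a module of dimension $\ge 2$ would produce a nonzero nilpotent transformation, contradicting the first reduction.

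Finally, a division ring $D$ with every element periodic must be a field. Each nonzero $x\in D$ satisfies $x^{n(x)-1}=1$, from which one sees that $\mathrm{char}\,D=p>0$ and that every finitely generated subring of $D$ is a finite integral domain; Wedderburn's little theorem then says each such subring is a finite field, and since any two elements of $D$ lie in a common such subring, $D$ itself is commutative. Assembling these pieces through the subdirect decomposition yields the commutativity of $R$. The principal obstacle is the density step that collapses a primitive periodic ring to a division ring, which is the technical core of Jacobson's original argument.
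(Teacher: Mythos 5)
The paper offers no proof of this lemma at all: it is quoted as a known theorem of Jacobson and referred to \cite{Jac}, so your attempt can only be measured against the standard argument. Your preparatory reductions are fine ($R$ has no nonzero nilpotents, $J(R)=0$, hence $R$ is a subdirect product of primitive quotients each still satisfying $a^{n(a)}=a$; the detour through Birkhoff is harmless but unnecessary once $J(R)=0$). The density step is right in spirit but wrong as stated: if $\dim_D V\geq 2$ and you choose $r$ with $rv_1=v_2\neq 0$ and $rv_2=0$, the element $r$ need not be nilpotent in $R$ (density controls it only on a finite-dimensional subspace), so you cannot invoke the absence of nilpotents; the correct contradiction is to apply the identity $r=r^{n(r)}$ to $v_1$, which gives $v_2=r v_1=r^{n(r)}v_1=0$.

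The genuine gap is in the division-ring case, which you dismiss in one sentence while declaring the density step to be "the technical core" --- it is the other way around. Your claim that every finitely generated subring of a periodic division ring $D$ is finite is unproved and is essentially equivalent to the theorem itself: the subring generated by two noncommuting periodic elements $a,b$ is spanned over $\mathbb{F}_p$ by all words in $a$ and $b$, and nothing a priori bounds the set of such words (local finiteness of $D^{*}$ is a Kurosh/Burnside-type statement that follows from commutativity, not one you may assume on the way to it). The standard repair is Herstein's lemma: if $a$ is non-central, then since $\mathbb{F}_p[a]$ is a finite field with $a^{p^{m}}=a$, there exists $x\in D$ with $xax^{-1}=a^{p^{i}}\neq a$; this conjugation relation, together with the finite multiplicative orders of $a$ and $x$, forces the group generated by $a$ and $x$ to be finite, so its $\mathbb{F}_p$-span is a finite domain, hence a finite division ring, hence a field by Wedderburn --- contradicting $xa\neq ax$. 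Without this (or an equivalent device from Jacobson's original structure-theoretic argument), your final step does not go through.
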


We now give a useful result to determine whether $R$ is weakly
$J$-quasipolar.

\begin{thm}\label{thmcomm}
If a ring $R$ is weakly $J$-quasipolar, then $R/J(R)$ is a
periodic ring which has three period and $R/J(R)$ is commutative.
\end{thm}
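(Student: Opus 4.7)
The plan is to show that every element $\bar{a} \in R/J(R)$ satisfies $\bar{a}^3 = \bar{a}$, from which both conclusions (periodicity of exponent three, and commutativity via Lemma \ref{jacobsonthm}) will follow immediately.

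To do this, I would start with an arbitrary $a \in R$ and invoke the weakly $J$-quasipolar hypothesis to obtain an idempotent $p \in comm^2(a)$ with $a + p \in J(R)$ or $a - p \in J(R)$. Passing to the quotient $R/J(R)$ and writing $\bar{x}$ for the image of $x$, this reduces to the two cases $\bar{a} = \bar{p}$ or $\bar{a} = -\bar{p}$, where in each case $\bar{p}$ is an idempotent of $R/J(R)$.

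In the first case $\bar{a} = \bar{p}$, we have $\bar{a}^2 = \bar{p}^2 = \bar{p} = \bar{a}$, so certainly $\bar{a}^3 = \bar{a}$. In the second case $\bar{a} = -\bar{p}$, we compute $\bar{a}^2 = \bar{p}^2 = \bar{p} = -\bar{a}$, and therefore $\bar{a}^3 = \bar{a} \cdot \bar{a}^2 = \bar{a}(-\bar{a}) = -\bar{a}^2 = \bar{a}$. Thus in both cases $\bar{a}^3 = \bar{a}$, which establishes that $R/J(R)$ is periodic of period three.

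For the final assertion, I would simply apply Lemma \ref{jacobsonthm} to $R/J(R)$ with $n(\bar{a}) = 3$ for every element, concluding that $R/J(R)$ is commutative. There is no real obstacle here: once the identity $\bar{a}^3 = \bar{a}$ is extracted from the two sign cases of the weakly $J$-quasipolar decomposition, the commutativity is a direct citation of Jacobson's classical theorem.
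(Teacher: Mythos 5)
Your proposal is correct and follows essentially the same route as the paper: reduce modulo $J(R)$ to get $\bar{a}=\bar{p}$ or $\bar{a}=-\bar{p}$ for an idempotent $\bar{p}$, deduce $\bar{a}^3=\bar{a}$ in both cases, and invoke Jacobson's theorem (Lemma \ref{jacobsonthm}) for commutativity. The only difference is that you spell out the computation $\bar{a}^3=\bar{a}$ in the case $\bar{a}=-\bar{p}$, which the paper leaves as ``clear.''
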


\begin{proof}
Let $R$ be weakly $J$-quasipolar and $r\in R$. So $r+p\in J(R)$ or
$r-p\in J(R)$ such that $p^2=p\in comm^2(a)$. Clearly, $\overline
r=\overline p$ or $\overline r=-\overline p$ and $\overline
p^2=\overline p$. If $\overline r=\overline p$, then $\overline
r^2=\overline r$ and so $\overline r^3=\overline r$. If $\overline
r=-\overline p$, then it is clear that $\overline r^3=\overline
r$. Hence $R/J(R)$ is a periodic ring which has period three. By
Lemma \ref{jacobsonthm},  $R/J(R)$ is commutative.
\end{proof}

The following example shows that the converse statement of Theorem
\ref{thmcomm} is not true in general.

\begin{ex} It is clear that the ring $\Bbb Z$ is commutative, $J(\Bbb Z)=0$ and $\Bbb Z/J(\Bbb Z)\cong
\Bbb Z$. But $\Bbb Z$ is not weakly $J$-quasipolar.
\end{ex}

By Theorem \ref{thmcomm}, we obtain the following important result
for weakly $J$-quasipolar rings.

\begin{cor}
If $R$ is weakly $J$-quasipolar, then $R/J(R)$ is weakly
$J$-quasipolar.
\end{cor}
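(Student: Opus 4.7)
The plan is to quotient the weakly $J$-quasipolar decomposition of an element of $R$ down to $R/J(R)$ and verify the three defining conditions survive. Pick an arbitrary element $\bar a \in R/J(R)$ and lift it to $a \in R$. Since $R$ is weakly $J$-quasipolar, there exists an idempotent $p \in comm^2(a)$ with $a+p \in J(R)$ or $a-p \in J(R)$.

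Passing to the quotient, $\bar p^2 = \bar p$ is clearly an idempotent of $R/J(R)$, and either $\bar a + \bar p = \bar 0$ or $\bar a - \bar p = \bar 0$. Since $\bar 0 \in J(R/J(R))$ (in fact $J(R/J(R)) = 0$), the Jacobson-radical condition of the definition is satisfied automatically.

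The only remaining requirement is that $\bar p \in comm^2(\bar a)$ inside $R/J(R)$. This is where Theorem \ref{thmcomm} does the work: that theorem tells us $R/J(R)$ is commutative, so every element of $R/J(R)$ lies in the second commutant of every other element. In particular $\bar p \in comm^2(\bar a)$ for free.

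Combining these observations shows that $\bar a$ admits a weakly $J$-spectral idempotent in $R/J(R)$, so $R/J(R)$ is weakly $J$-quasipolar. There is no real obstacle here since the hard structural work — commutativity of $R/J(R)$ — has already been established in Theorem \ref{thmcomm}; the corollary is a short deduction from it.
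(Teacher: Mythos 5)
Your proof is correct and follows essentially the same route as the paper: both push the decomposition $a\pm p\in J(R)$ down to $R/J(R)$ and then invoke Theorem \ref{thmcomm} (commutativity of $R/J(R)$) to dispose of the second-commutant condition, the paper merely phrasing the conclusion via Lemma \ref{mete}(1) applied to the idempotent $\bar p$. No gap.
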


\begin{proof}
Let $\overline x=r+J(R)$ and $R$ is weakly $J$-quasipolar. Hence
$r+p=j_1\in J(R)$ or $r-p=j_2\in J(R)$ such that $p^2=p\in
comm^2(r)$. Thus $r=j_1-p$ or $r=j_2+p$ and so $\overline
x=-p+J(R)$ or $\overline x=p+J(R)$. $R/J(R)$ is commutative by
Theorem \ref{thmcomm}, so $R/J(R)$ is weakly $J$-quasipolar from
Lemma \ref{mete} (1).
\end{proof}

Recall that a ring $R$ is said to be {\it clean} if for each $a\in
R$ there exists $e^2 = e\in R$ such that $a-e\in U(R)$. According
to Nicholson and Zhou \cite{NZ}, a ring $R$ is said to be {\it
uniquely clean} if for each $a\in R$ there exists unique
idempotent $e\in R$ such that $a -e\in U(R)$. In \cite{CC3}, it is
proved that a ring $R$ is uniquely clean if and only if $R$ is
abelian (i.e., each idempotent of $R$ is central) $J$-quasipolar.
In this direction we generalize this result for weakly
$J$-quasipolar rings.

\begin{thm}\label{uniquely.clean}
A ring $R$ is abelian weakly $J$-quasipolar if and only if $R$ is
uniquely clean.
\end{thm}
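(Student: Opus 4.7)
The plan is to bridge the theorem to the known characterization in \cite{CC3} that a ring is abelian $J$-quasipolar if and only if it is uniquely clean, using Proposition \ref{2injr} as the interpolation step between $J$-quasipolarity and weak $J$-quasipolarity. The $(\Leftarrow)$ direction is immediate: a uniquely clean ring is abelian by the Nicholson--Zhou theorem, and is weakly $J$-quasipolar by Lemma \ref{mete}(6).

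For the forward direction, suppose $R$ is abelian and weakly $J$-quasipolar; the goal is to promote this to $J$-quasipolarity, for then \cite{CC3} yields uniquely clean. By Proposition \ref{2injr}, it suffices to show $2\in J(R)$. To do so, I would apply the weak $J$-quasipolar hypothesis to $a=2$. Because $2$ is central, $comm(2)=R$ and $comm^2(2)=Z(R)$, so the associated idempotent $p$ is forced to be central. In the case $2-p\in J(R)$, left-multiplying by $p$ yields $p(2-p)=p\in J(R)$; since the only idempotent inside $J(R)$ is $0$, this gives $p=0$ and hence $2\in J(R)$.

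The remaining case $2+p\in J(R)$ is the crux. Multiplying by $p$ and by $1-p$ respectively gives $3p\in J(R)$ and $2(1-p)\in J(R)$, which decomposes the problem across the central idempotent $p$. To force $p=0$ I would combine Theorem \ref{thmcomm} (commutativity of $R/J(R)$ together with the identity $\overline{r}^{\,3}=\overline{r}$), the abelian hypothesis, and Lemma \ref{uniqidempotent} applied to well-chosen units of $R$ (which fixes their weakly $J$-spectral idempotent as $1$). I expect this step to be the main obstacle: the equations $3p,\ 2(1-p)\in J(R)$ are not by themselves strong enough to eliminate $p$, and a finer structural argument comparing the weak $J$-quasipolar decompositions of related elements (such as $-1$, $3$, or $1+p$) across the $pR\oplus(1-p)R$ splitting is required to close the gap. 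Once $2\in J(R)$ is in hand, Proposition \ref{2injr} promotes $R$ to a $J$-quasipolar ring and the \cite{CC3} characterization completes the proof.
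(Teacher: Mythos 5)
Your reduction is sound in outline: if you could establish $2\in J(R)$, then Proposition \ref{2injr} together with the characterization of abelian $J$-quasipolar rings in \cite{CC3} would finish the forward direction, and your backward direction coincides with the paper's. The computations you carry out are also correct: for the central element $2$ the associated idempotent $p$ is central, the case $2-p\in J(R)$ forces $p=0$ and hence $2\in J(R)$, and the case $2+p\in J(R)$ yields only $3p\in J(R)$ and $2(1-p)\in J(R)$. But the obstacle you flag in that last case is not one that a ``finer structural argument'' can remove: the implication you are trying to prove is false, and the case $2+p\in J(R)$ with $p\neq 0$ is precisely where it fails. Take $R=\Bbb Z_3$. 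It is commutative (hence abelian) and weakly $J$-quasipolar, since $0-0$, $1-1$ and $2+1$ all lie in $J(\Bbb Z_3)=0$; yet $2\notin J(\Bbb Z_3)$ and $R$ is not uniquely clean, because $2-0$ and $2-1$ are both units, so $a=2$ admits two distinct clean idempotents. In this example $p=1$, $3p=0$ and $2(1-p)=0$, consistent with everything you derived, so no further argument can close your remaining case.

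For comparison, the paper's proof does not pass through $2\in J(R)$ at all: it applies the weak decomposition to $-a$, observes that in either case $a-(1-p)\in U(R)$, and then asserts uniqueness of the clean decomposition by appeal to Proposition \ref{idem.teklik}. That last step is the paper's own gap --- Proposition \ref{idem.teklik} gives uniqueness of the weakly $J$-spectral idempotent, not uniqueness among all idempotents $e$ with $a-e\in U(R)$ --- and $\Bbb Z_3$ defeats it in exactly the same way. (The paper even records elsewhere that $\Bbb Z_3$ is weakly $J$-quasipolar but not strongly $J$-clean, which already contradicts the theorem, since uniquely clean rings are strongly $J$-clean.) So your proposal is incomplete, but the incompleteness reflects a genuine error in the statement rather than a missing trick; with the extra hypothesis $2\in J(R)$ your argument goes through verbatim, at which point Proposition \ref{2injr} reduces the theorem to the known $J$-quasipolar case.
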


\begin{proof}
Assume that $R$ is a uniquely clean ring. By \cite[Lemma 4]{NZ},
$R$ is abelian. Since \cite[Teorem 20]{NZ}, for every $a\in R$
there exists unique $p^2=p\in R$ such that $e-a\in J(R)$. So
$a-e\in J(R)$ and $R$ is abelian weakly $J$-quasipolar.
Conversely, given $a\in R$, then $-a\in R$. Hence $-a+p\in J(R)$
or $-a-p\in J(R)$ such that $p^2=p\in R$. If $-a+p\in J(R)$, so
$a$ is uniquely clean. If $-a-p\in J(R)$, then $a-(1-p)\in
U(R)$.Uniqueness of the idempotent $p$ follows from
\ref{idem.teklik}. Therefore $R$ is a uniquely clean ring.
\end{proof}

The next example illustrates that ``abelian" condition is not
superfluous in Teorem \ref{uniquely.clean}.

\begin{ex} The matrix ring $T_2(\Bbb Z_2)$ is weakly $J$-quasipolar, but not abelian. By
\cite[Lemma 4]{NZ},  $T_2(\Bbb Z_2)$ is not a uniquely clean ring.
\end{ex}

In \cite{Uohh}, Ungor et al. introduced and studied a new class of
reduced rings (i.e., it has no nonzero nilpotent elements). A ring
$R$ is called {\it feckly reduced} if $R/J(R)$ is a reduced ring.
In this direction we show that every weakly $J$-quasipolar ring is
feckly reduced.

\begin{thm}\label{sait}
If $R$ is a weakly $J$-quasipolar ring, then it is feckly reduced.
\end{thm}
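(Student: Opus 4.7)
The plan is to exploit Theorem \ref{thmcomm}, which already tells us two very strong facts about $R/J(R)$: it is commutative, and every element $\overline{r}$ satisfies $\overline{r}^{3}=\overline{r}$. These by themselves essentially force $R/J(R)$ to be reduced, so the bulk of the work is already done and the argument reduces to a short elementary calculation.

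First I would take an arbitrary nilpotent element $\overline{x}\in R/J(R)$, say $\overline{x}^{n}=\overline{0}$ for some $n\ge 1$. The key step is to iterate the identity $\overline{x}^{3}=\overline{x}$ supplied by Theorem \ref{thmcomm}: multiplying both sides by $\overline{x}^{2}$ gives $\overline{x}^{5}=\overline{x}^{3}=\overline{x}$, and inductively $\overline{x}=\overline{x}^{2m+1}$ for every $m\ge 0$. Choosing $m$ large enough that $2m+1\ge n$ and using $\overline{x}^{n}=\overline{0}$ then yields $\overline{x}=\overline{x}^{2m+1}=\overline{x}^{n}\,\overline{x}^{2m+1-n}=\overline{0}$.

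Consequently $R/J(R)$ has no nonzero nilpotent elements, i.e., $R/J(R)$ is reduced, which by definition means $R$ is feckly reduced.

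The only potential obstacle is whether Theorem \ref{thmcomm} really delivers the exponent identity uniformly for \emph{every} element of $R/J(R)$, and whether we need commutativity at all. In our setting the former is explicit in Theorem \ref{thmcomm} (the period $3$ does not depend on the element), and commutativity is not actually required for this particular deduction, so the argument goes through directly.
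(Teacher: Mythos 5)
Your proof is correct, but it takes a genuinely different route from the paper. The paper argues directly from the definition: given $r\in R$ with $r^2=0$ and a weakly $J$-spectral idempotent $p$ with, say, $r-p\in J(R)$, it multiplies $r-p$ by $r$ to get $rp\in J(R)$, then by $p$ to get $p\in J(R)$, forcing $p=0$ (an idempotent in the Jacobson radical is zero) and hence $r\in J(R)$. You instead route everything through Theorem \ref{thmcomm}: the identity $\overline{x}^{3}=\overline{x}$ in $R/J(R)$ formally kills every nilpotent, since $\overline{x}=\overline{x}^{2m+1}$ for all $m$ and a large enough odd power is annihilated by $\overline{x}^{n}=\overline{0}$. (Indeed it suffices to check square-zero elements: $\overline{x}^{2}=\overline{0}$ gives $\overline{x}=\overline{x}^{3}=\overline{x}^{2}\,\overline{x}=\overline{0}$ in one line.) Your observation that commutativity is not needed is accurate, and there is no circularity since Theorem \ref{thmcomm} is established independently of this result. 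What your approach buys is economy and a conceptual point — feckly reducedness is a formal consequence of $R/J(R)$ satisfying $x^{3}=x$, with the weak $J$-quasipolarity used only once, inside Theorem \ref{thmcomm}. What the paper's approach buys is self-containedness: it exhibits concretely that the weakly $J$-spectral idempotent of a square-zero element must vanish, which is a fact of independent interest and does not depend on having first proved periodicity of the quotient.
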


\begin{proof}
Let $R$ be weakly $J$-quasipolar and $r^2=0$. Therefore there
exists $p^2=p\in comm^2(r)$ such that $r+p\in J(R)$ or $r-p\in
J(R)$. If $r-p\in J(R)$, then $r(r-p)=r^2-rp\in J(R)$. Since
$r^2=0\in J(R)$, $rp\in J(R)$. Also $(r-p)p=rp-p\in J(R)$. Hence
$p\in J(R)$ and so $p=0$. Thus $r\in J(R)$ and $R/J(R)$ is
reduced. If $r+p\in J(R)$, then similarly $r\in J(R)$ and $R/J(R)$
is reduced. Consequently, $R$ is a feckly reduced ring.
\end{proof}

Let $J^\sharp(R)$ denote the subset $\{x \in R~|~ \exists n\in
\Bbb N ~such ~that~ x^n \in J(R)\}$ of $R$. It is obvious that
$J(R) \subseteq J^\sharp(R)$. Weakly $J$-quasipolar rings play an
important role for the reverse inclusion.

\begin{cor}\label{diyez}
If $R$ is a weakly $J$-quasipolar ring, then $J(R)=J^{\sharp} (R)$
\end{cor}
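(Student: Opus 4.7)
The plan is to show the non-trivial inclusion $J^{\sharp}(R)\subseteq J(R)$ by reducing modulo $J(R)$ and then invoking Theorem \ref{sait}. The inclusion $J(R)\subseteq J^{\sharp}(R)$ is immediate from the definition of $J^{\sharp}(R)$ (take $n=1$), so no work is required there.

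For the reverse inclusion, I would take an arbitrary $x\in J^{\sharp}(R)$, so that $x^{n}\in J(R)$ for some $n\in\mathbb{N}$. Passing to the quotient $\overline{R}=R/J(R)$, this means $\overline{x}^{\,n}=\overline{0}$, i.e., $\overline{x}$ is a nilpotent element of $\overline{R}$. Now Theorem \ref{sait} tells us that $R$ being weakly $J$-quasipolar forces $\overline{R}$ to be reduced, and a reduced ring has no nonzero nilpotent elements. Therefore $\overline{x}=\overline{0}$, which is exactly $x\in J(R)$.

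Combining both inclusions gives $J(R)=J^{\sharp}(R)$. The only substantive ingredient is Theorem \ref{sait}; everything else is a one-line quotient argument, so I do not anticipate any real obstacle beyond citing that theorem correctly.
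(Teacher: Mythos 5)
Your proof is correct and follows essentially the same route as the paper: both arguments rest entirely on Theorem \ref{sait} (feckly reducedness), the only difference being that the paper outsources the final step to \cite[Proposition 2.6]{Uohh} while you prove that step directly with the (correct) one-line quotient argument.
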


\begin{proof} Let $R$ be a weakly $J$-quasipolar ring. By Theorem
\ref{sait},  $R$ is feckly reduced and so $J(R)=J^{\sharp} (R)$
from \cite[Proposition 2.6]{Uohh}.
\end{proof}

The following result follows from Corollary  \ref{diyez}.

\begin{cor}
If $R$ is a  $J$-quasipolar ring, then  $J(R)=J^{\sharp} (R)$.
\end{cor}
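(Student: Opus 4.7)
The plan is to reduce this immediately to Corollary \ref{diyez}. The chain of implications established earlier in the excerpt gives us everything we need: by Lemma \ref{mete} (5), every $J$-quasipolar ring is weakly $J$-quasipolar, and by Corollary \ref{diyez}, every weakly $J$-quasipolar ring $R$ satisfies $J(R) = J^{\sharp}(R)$. So the proof is essentially a one-line invocation of these two results.

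More explicitly, I would first cite Lemma \ref{mete} (5) to conclude that the given $J$-quasipolar ring $R$ is weakly $J$-quasipolar. Then I would apply Corollary \ref{diyez} (which itself chains through Theorem \ref{sait} and \cite[Proposition 2.6]{Uohh}) to conclude $J(R) = J^{\sharp}(R)$. The inclusion $J(R) \subseteq J^{\sharp}(R)$ is automatic from the definition of $J^{\sharp}(R)$ (take $n=1$), so really only the reverse inclusion requires any argument, and this is precisely what being weakly $J$-quasipolar (hence feckly reduced) delivers.

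There is essentially no obstacle here; the corollary is a direct specialization. The only thing worth double-checking is the chain of references is consistent, namely that Corollary \ref{diyez} is stated for \emph{rings} rather than merely for individual elements, so that the conclusion $J(R) = J^{\sharp}(R)$ applies at the level of the entire ring $R$, which it does.
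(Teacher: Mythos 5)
Your proof is correct and matches the paper's own argument: the paper derives this corollary directly from Corollary \ref{diyez} via the fact that every $J$-quasipolar ring is weakly $J$-quasipolar (Lemma \ref{mete}(5)). Nothing further is needed.
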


Corollary \ref{diyez} is helpful to show that a ring is not weakly
$J$-quasipolar.

\begin{ex}
Let $R$ denote the ring $M_2(\Bbb Z_2)$. Then
$$J^{\sharp}(R)=\left\{
\begin{bmatrix}0&0\\0&0\end{bmatrix},
\begin{bmatrix}0&1\\0&0\end{bmatrix},\begin{bmatrix}0&0\\1&0\end{bmatrix},
\begin{bmatrix}1&1\\1&1\end{bmatrix}\right\}$$
and $J(R)=\left\{\begin{bmatrix}0&0\\0&0\end{bmatrix}\right\}$. By
Corollary \ref{diyez}, $R$ is not weakly $J$-quasipolar.
\end{ex}

Let $R$ be a ring and $a,b\in R$. Then $R$ is called
\textit{directly finite}, if $ab=1$ then $ba=1$. It is well known
that  $R$ is directly finite  if and only if $R/J(R)$ is directly
finite.

\begin{prop}\label{dirfin} If a ring $R$ is weakly $J$-quasipolar, then $R$ is directly finite.
\end{prop}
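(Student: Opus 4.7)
The plan is to exploit the two facts already available in the excerpt: that $R/J(R)$ is commutative for any weakly $J$-quasipolar ring (Theorem \ref{thmcomm}), and that $R$ is directly finite if and only if $R/J(R)$ is directly finite (stated just before the proposition). Together these reduce the problem to the trivial observation that commutative rings are directly finite.

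More concretely, I would proceed as follows. Suppose $R$ is weakly $J$-quasipolar and let $a,b \in R$ with $ab = 1$. Reduce modulo $J(R)$ to get $\overline{a}\,\overline{b} = \overline{1}$ in $R/J(R)$. By Theorem \ref{thmcomm}, $R/J(R)$ is commutative, so $\overline{b}\,\overline{a} = \overline{a}\,\overline{b} = \overline{1}$. Hence $R/J(R)$ is directly finite, and by the well-known lifting fact cited in the paragraph preceding the proposition, $R$ itself is directly finite.

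There is essentially no obstacle: the substantive ring-theoretic work has already been done in Theorem \ref{thmcomm}, and the present proposition is a one-line corollary of it combined with the $J(R)$-lifting of direct finiteness. The only minor point worth noting in the write-up is that a commutative ring is automatically directly finite — from $ab=1$ one immediately gets $ba = ab = 1$ — which is why commutativity of $R/J(R)$ is the right hypothesis to invoke here.
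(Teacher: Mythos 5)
Your proof is correct, but it takes a different route from the paper's. The paper disposes of this proposition by citing \cite[Proposition 4.8]{Uohh}: having shown in Theorem \ref{sait} that a weakly $J$-quasipolar ring is feckly reduced (i.e.\ $R/J(R)$ is reduced), it invokes the external result that feckly reduced rings are directly finite, which ultimately rests on the fact that reduced rings are directly finite together with the same $J(R)$-lifting you use. You instead go through Theorem \ref{thmcomm}: $R/J(R)$ is commutative, commutative rings are trivially directly finite (from $ab=1$ one gets $ba=ab=1$), and direct finiteness lifts modulo the Jacobson radical. Your argument is completely self-contained within the paper --- it needs no appeal to \cite{Uohh} and replaces the ``reduced $\Rightarrow$ directly finite'' step (which requires a small argument about idempotents being central in reduced rings) with the one-line commutative case. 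The paper's version, on the other hand, situates the result inside the feckly reduced framework it is developing, which is why it prefers that citation. Both proofs are sound; yours is the more elementary.
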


\begin{proof}
The proof is clear from \cite[Proposition 4.8]{Uohh}.
\end{proof}

Since every $J$-quasipolar ring is weakly $J$-quasipolar, the
following result follows from Proposition \ref{dirfin}.

\begin{cor}
If $R$ is a  $J$-quasipolar ring, then $R$ is directly finite.
\end{cor}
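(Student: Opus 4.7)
The plan is essentially to chain together two facts already established in the paper. First, Lemma \ref{mete}(5) records that every $J$-quasipolar ring is weakly $J$-quasipolar; second, Proposition \ref{dirfin} says that every weakly $J$-quasipolar ring is directly finite. So the strategy is simply to invoke these two results in sequence.

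Concretely, I would start by letting $R$ be a $J$-quasipolar ring and pick an arbitrary $a \in R$. By definition there is an idempotent $p \in comm^2(a)$ with $a + p \in J(R)$, which in particular means $a$ is weakly $J$-quasipolar (taking the ``$a+p \in J(R)$'' branch of the disjunction in the definition). Since $a$ was arbitrary, this shows $R$ is weakly $J$-quasipolar. Then I would apply Proposition \ref{dirfin} to conclude that $R$ is directly finite.

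There is no real obstacle here; the corollary is a one-line consequence of the two cited results. The only thing worth writing out explicitly is the trivial inclusion of the class of $J$-quasipolar rings into the class of weakly $J$-quasipolar rings, which amounts to observing that the condition ``$a + p \in J(R)$'' is a special case of ``$a + p \in J(R)$ or $a - p \in J(R)$.''
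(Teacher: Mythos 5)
Your proposal is correct and matches the paper's own argument exactly: the paper derives this corollary by noting that every $J$-quasipolar ring is weakly $J$-quasipolar (Lemma \ref{mete}(5)) and then applying Proposition \ref{dirfin}. Nothing further is needed.
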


In \cite{Ni}, strongly clean rings are introduced and studied. A
ring $R$ is \textit{strongly clean}, if for every $a\in R$ there
exists $e^2=e\in R$ such that $a-e\in U(R)$ and $ae=ea$. At the
end of that paper, the authors ask some open questions. One of
them is `` Is every strongly clean ring directly finite?". By
Proposition \ref{dirfin}, weakly $J$-quasipolar rings are both
strongly clean and directly finite.

We also note that  it is easy to show that for $n\geq 2$,  the
matrix ring $M_n(R)$ is not weakly $J$-quasipolar from Proposition
\ref{dirfin}.

\section {Weakly $J$-quasipolarity of Matrix rings}

At the end of the second section of this paper, we show that any
matrix ring $M_n(R)$ with $n\geq 2$ is not weakly $J$-quasipolar.
It is still important to determine whether an individual matrix is
weakly $J$-quasipolar. In this section we study weakly
$J$-quasipolarity of some matrix rings. In particular, we
investigate necessary and sufficient conditions weakly
$J$-quasipolarity of the matrix ring $T_2(R)$ over a commutative
local ring $R$. We determine under what conditions a single
$2\times 2$ matrix over a commutative local ring is weakly
$J$-quasipolar.

We start with the obvious proposition.

\begin{prop}\label{ýdempotent} {\rm (1)} Let $R$ be a commutative local ring. Then $A\in M_2(R)$ is an
idempotent if and only if either $A = 0$, or $A = I_2$, or $A =
\begin{bmatrix}a&b\\
c&1-a\end{bmatrix}$ where $bc = a-a^2$.\\
{\rm (2)}  Let $R$ be a commutative local ring and $P\in T_2(R)$.
Then $P$ is an idempotent if and only if $P$ has a form
 $\begin{bmatrix}$
$1     &0      \\$ $ 0      &1$ $\end{bmatrix}$,
 $\begin{bmatrix}$
$0     &0      \\$ $ 0      &0$ $\end{bmatrix}$,
 $\begin{bmatrix}$
$1     &x      \\$ $ 0      &0$ $\end{bmatrix}$,
 $\begin{bmatrix}$
$0     &x      \\$ $ 0      &1$ $\end{bmatrix}$ for some $x \in
R$.
\end{prop}

\begin{proof} (1) is clear from \cite[Lemma 16.4.10]{Ch2} and (2)
is straightforward.
\end{proof}

\begin{prop}\label{ustucgen}
Let $R$ be a commutative local ring. $A$=
 $\begin{bmatrix}$
  $a_1     &  a_2      \\$

 $ 0      &  a_3$
$\end{bmatrix}$ is weakly $J$-quasipolar in $T_2(R)$ if and only
if one of the following holds:\begin{enumerate}
\item $A\in J(T_2(R))$,
\item $A\in U(T_2(R))$,
\item $A+P$ or $A-P\in J(T_2(R))$ where $P=$ $\begin{bmatrix}$
  $1     &  x      \\$
$ 0      &  0 $ $\end{bmatrix}$ such that $x=(a_1-a_3)^{-1}a_2$,
\item $A-P$ or
$A+P\in J(T_2(R))$ where $P=$
 $\begin{bmatrix}$
  $0     &  x      \\$

 $ 0      &  1 $
$\end{bmatrix}$ such that $x=(a_3-a_1)^{-1}a_2$.
\end{enumerate}

\end{prop}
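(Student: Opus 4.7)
The plan is to exploit the explicit list of idempotents of $T_2(R)$ furnished by the preceding proposition: when $R$ is commutative local, the only idempotents in $T_2(R)$ are $0$, $I_2$, $\begin{bmatrix}1&x\\0&0\end{bmatrix}$, and $\begin{bmatrix}0&x\\0&1\end{bmatrix}$ for some $x\in R$. Any weakly $J$-spectral idempotent $P$ of $A$ must therefore be of one of these four shapes, so the proof reduces to matching each shape with one of the conditions (1)--(4).

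For the forward direction, I would run through the four possibilities for $P$. If $P=0$ then $A=A\pm P\in J(T_2(R))$, which is (1). If $P=I_2$ then $A=\mp I_2+j$ for some $j\in J(T_2(R))$, so $A\in U(T_2(R))$, which is (2). If $P=\begin{bmatrix}1&x\\0&0\end{bmatrix}$, inspection of the diagonal entries of $A\pm P$ (using that $J(T_2(R))$ is the set of upper triangular matrices with diagonal in $J(R)$) forces $a_3\in J(R)$ together with $a_1\mp 1\in J(R)$; since $R$ is local this makes $a_1-a_3\in U(R)$. Since $A\in comm(A)$ and $P\in comm^2(A)$, the equation $AP=PA$ holds, and its $(1,2)$-entry reads $a_1x=a_2+a_3x$, yielding $x=(a_1-a_3)^{-1}a_2$; this is (3). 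The case $P=\begin{bmatrix}0&x\\0&1\end{bmatrix}$ is entirely symmetric and gives (4).

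For the converse, (1) and (2) follow from Lemma~\ref{mete}(1),(3) together with the observation that $0$ and $I_2$ are central and hence automatically in $comm^2(A)$. Cases (3) and (4) require verifying that the prescribed $P$ lies in $comm^2(A)$. To this end I would compute $comm(A)$ explicitly: a matrix $B=\begin{bmatrix}b_1&b_2\\0&b_4\end{bmatrix}\in T_2(R)$ satisfies $AB=BA$ iff $(a_1-a_3)b_2=(b_1-b_4)a_2$, which---using the unit $(a_1-a_3)^{-1}$---rewrites as $b_2=(b_1-b_4)x$. A direct multiplication of $P$ with such a $B$ then yields $PB=BP$, placing $P$ in $comm^2(A)$, while $A\pm P\in J(T_2(R))$ is exactly the diagonal information recorded in the forward step.

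The main obstacle is the second-commutant verification in cases (3) and (4); once the local hypothesis together with the diagonal constraints on $A\pm P$ secures $a_1-a_3\in U(R)$, the parameterization of $comm(A)$ and the multiplication check $PB=BP$ are routine, but they must be carried out carefully because it is the precise form of the commutant that pins down the value of $x$.
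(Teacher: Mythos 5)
Your overall strategy coincides with the paper's: both arguments rest on the classification of the idempotents of $T_2(R)$ over a commutative local ring and then run a case analysis on the shape of the weakly $J$-spectral idempotent $P$, using the relation $(a_1-a_3)b_2=(b_1-b_4)a_2$ that characterizes $comm(A)$ to pin down the off-diagonal entry $x$. (The paper organizes the cases by whether $a_1,a_3$ lie in $J(R)$ or $U(R)$ and deduces the diagonal of $P$ from that; you case on $P$ directly and deduce the constraints on $a_1,a_3$ — these are mirror images of the same argument.) Your write-up is in fact tidier in two places: you obtain $x=(a_1-a_3)^{-1}a_2$ as a necessary consequence of $AP=PA$, where the paper's ``providing $x=\dots$'' reads as a choice rather than a deduction, and you actually carry out the second-commutant check $PB=BP$ needed for the converse of (3) and (4), which the paper dismisses with ``the converse statement is clear.''

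There is, however, one genuine gap, and it exposes a defect in the statement itself. You assert that the converse of condition (2) follows from Lemma \ref{mete}; but that lemma covers idempotents and elements of the Jacobson radical, not units, and a unit of $T_2(R)$ need not be weakly $J$-quasipolar. By Lemma \ref{uniqidempotent}, a unit $A$ is weakly $J$-quasipolar only if $A+I_2\in J(T_2(R))$ or $A-I_2\in J(T_2(R))$, and the paper's own Example \ref{yosum} exhibits $A=\mathrm{diag}(1,2)\in U(T_2(\Bbb Z_3))$ for which neither holds. So condition (2) as written does not imply weak $J$-quasipolarity, and the ``if'' direction of the proposition fails for (2) unless that condition is strengthened to $A\pm I_2\in J(T_2(R))$. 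The paper's proof conceals the same problem behind ``the converse statement is clear,'' so your attempt is no worse than the original here — but your citation of Lemma \ref{mete} for units is a non sequitur rather than a proof, and a complete argument would either add the strengthened hypothesis or note explicitly that only the forward implication is being established for case (2).
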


\begin{proof}
Assume that $A$ is weakly $J$-quasipolar.\\
\textbf{Case 1:} Let $A+P\in J(T_2(R))$ such that $P^2=P\in
comm^2(A)$. Since $A+P=$
 $\begin{bmatrix}$
  $a_1+p_1     &  a_2+p_2      \\$

 $ 0      &  a_3+p_3$
$\end{bmatrix}$ $\in J(T_2(R))$, $a_1+p_1\in J(R)$ and $a_3+p_3\in
J(R)$. Besides assume that $B\in comm(A)$ and take $B=$
 $\begin{bmatrix}$
  $b_1     &  b_2      \\$

 $ 0      &  b_3$
$\end{bmatrix}$, so \\ $\begin{bmatrix}$
  $b_1a_1     &  b_1a_2+b_2a_3      \\$

 $ 0      &  b_3a_3$ $\end{bmatrix}$=$\begin{bmatrix}$
  $a_1b_1     &  a_1b_2+a_2b_3      \\$

 $ 0      &  a_3b_3$
 $\end{bmatrix}$. Therefore $a_2(b_1-b_3)=b_2(a_1-a_3)$. \\
\textbf{(i)} If $a_1,a_3\in J(R)$, then $p_1=p_3=0$. Hence
$p_2=0$.\\
\textbf{(ii)} If $a_1,a_3\in U(R)$, then $p_1=p_3=1$. Hence
$p_2=0$.\\
\textbf{(iii)} If $a_1\in J(R)$, $a_3\in U(R)$, then $p_1=0,p_3=1$
and $p_2=x\in R$. Since $a_1-a_3\in U(R)$,
$b_2=(a_1-a_3)^{-1}a_2(b_1-b_3)$. Providing $x=(a_3-a_1)^{-1}a_2$, then $P\in comm(B)$. Hence $P\in comm^2(A)$.\\
\textbf{(iv) }If $a_1\in U(R)$, $a_3\in J(R)$, then $p_1=1,p_3=0$
and $p_2=x\in R$. Because of $a_1-a_3\in U(R)$,
$b_2=(a_1-a_3)^{-1}a_2(b_1-b_3)$. Providing
$x=(a_1-a_3)^{-1}a_2$, then $P\in comm(B)$. Therefore $P\in comm^2(A)$. \\
\textbf{Case 2:} Let $A-P\in J(T_2(R))$ such that $P^2=P\in
comm^2(A)$. Proof is similar to proof of Case 1.

The converse statement is clear.
\end{proof}

The following result is a direct consequence of Proposition
\ref{ustucgen} for $J$-quasipolar rings.

\begin{cor}
Let $R$ be a commutative local ring. $A$=
 $\begin{bmatrix}$
  $a_1     &  a_2      \\$

 $ 0      &  a_3$
$\end{bmatrix}$ is $J$-quasipolar in $T_2(R)$ if and only if one
of the following holds:\begin{enumerate}
\item $A\in J(T_2(R))$.
\item $A\in U(T_2(R))$.
\item $A+P\in J(T_2(R))$ where $P=$ $\begin{bmatrix}$
  $1     &  x      \\$

 $ 0      &  0 $
$\end{bmatrix}$ such that $x=(a_1-a_3)^{-1}a_2$ or
$x=(a_3-a_1)^{-1}a_2$.
\end{enumerate}
\end{cor}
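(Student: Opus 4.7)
The plan is to derive the corollary as an immediate specialization of Proposition~\ref{ustucgen} to the $J$-quasipolar setting. The key observation is that an element $A\in T_2(R)$ is $J$-quasipolar precisely when it admits a weakly $J$-quasipolar decomposition with the ``plus'' sign, i.e.\ when $A+P\in J(T_2(R))$ for some $P^2=P\in comm^2(A)$. So I would invoke Proposition~\ref{ustucgen} to enumerate all possible spectral idempotents $P$, and then retain only those branches in which $A+P\in J(T_2(R))$, discarding the ``$A-P\in J(T_2(R))$'' alternative in each case.

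Running through the four cases: Case~(1) of Proposition~\ref{ustucgen} forces $P=0$ and gives $A\in J(T_2(R))$, which is item~(1). Case~(2) forces $P=I_2$ and gives $A+I_2\in J(T_2(R))$, whence in particular $A\in U(T_2(R))$, matching item~(2). In Cases~(3) and~(4) the idempotent has one of the nontrivial upper-triangular shapes $\begin{bmatrix} 1 & x \\ 0 & 0 \end{bmatrix}$ or $\begin{bmatrix} 0 & x \\ 0 & 1 \end{bmatrix}$; restricting to the ``$+$'' branch forces either $a_1\in U(R)$, $a_3\in J(R)$ with $x=(a_1-a_3)^{-1}a_2$ (the shape from Case~(3) of the proposition), or $a_1\in J(R)$, $a_3\in U(R)$ with $x=(a_3-a_1)^{-1}a_2$ (the shape from Case~(4)). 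These two sub-possibilities are collected into item~(3), whose ``or'' in the formula for $x$ reflects exactly this dichotomy. The converse implication is immediate: in each of (1)--(3) the explicit idempotent $P$ was already shown to lie in $comm^2(A)$ inside the proof of Proposition~\ref{ustucgen}.

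The main bookkeeping task is the clean merger of Cases~(3) and~(4) of Proposition~\ref{ustucgen} into the single item~(3) of the corollary, keeping track of which of $a_1, a_3$ is invertible in each sub-case and verifying that the two formulas for $x$ exhaust the possibilities. Apart from this, no new computation is needed: the formulae for $x$, the idempotency of $P$, and the membership $P\in comm^2(A)$ are all recorded inside the proof of Proposition~\ref{ustucgen}, which is precisely why the corollary is flagged as a ``direct consequence''.
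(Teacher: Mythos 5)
Your proposal is correct and matches the paper exactly: the paper offers no separate argument for this corollary, merely declaring it ``a direct consequence of Proposition \ref{ustucgen} for $J$-quasipolar rings,'' and your write-up simply makes that specialization explicit by keeping only the $A+P\in J(T_2(R))$ branch and merging cases (3) and (4) of the proposition into item (3). Any residual imprecision (e.g.\ that a unit $A$ need not satisfy $A+I_2\in J(T_2(R))$ in the converse direction of item (2)) is inherited from the statement of the proposition itself and is not introduced by your argument.
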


\begin{cor}\label{ust3}
Let $R$ be a ring. If  $T_n(R)$ with $n\geq 2$ is weakly
$J$-quasipolar, then $R$ is weakly $J$-quasipolar.
\end{cor}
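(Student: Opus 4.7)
The plan is to reduce the statement to Theorem \ref{eRe}. Observe that the matrix unit $f = e_{11} \in T_n(R)$ is an idempotent, and the corner $f T_n(R) f$ consists precisely of matrices of the form $a e_{11}$ with $a \in R$. So the map $a \mapsto a e_{11}$ gives a ring isomorphism $R \cong f T_n(R) f$.

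Once that identification is in place, the argument is essentially a direct application of Theorem \ref{eRe}: since $T_n(R)$ is assumed to be weakly $J$-quasipolar, the corner $f T_n(R) f$ is weakly $J$-quasipolar, and hence so is the isomorphic copy $R$. Concretely, given any $a \in R$, apply the hypothesis to the matrix $a e_{11} \in f T_n(R) f \subseteq T_n(R)$ to obtain an idempotent $q \in \mathrm{comm}^2(a e_{11})$ with $a e_{11} \pm q \in J(T_n(R))$; then by Theorem \ref{eRe} one may arrange $q = p e_{11}$ for some idempotent $p \in R$, and pulling back through the isomorphism gives $p^2 = p \in \mathrm{comm}^2(a)$ with $a \pm p \in J(R)$.

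The only points that require brief justification are the standard facts that $J(f T_n(R) f) = f J(T_n(R)) f$ (which corresponds under the isomorphism to $J(R)$, since $J(T_n(R))$ is the set of upper triangular matrices whose diagonal entries lie in $J(R)$), and that the property of being weakly $J$-quasipolar transports along ring isomorphisms (the defining conditions $p^2 = p$, $p \in \mathrm{comm}^2(a)$, $a \pm p \in J$ are all preserved by any ring isomorphism). I do not foresee any real obstacle; the proof is essentially one line invoking Theorem \ref{eRe} together with the corner identification $e_{11} T_n(R) e_{11} \cong R$.
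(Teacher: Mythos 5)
Your proposal is correct and matches the paper's own proof, which likewise takes $f=e_{11}$, notes $fT_n(R)f\cong R$, and invokes Theorem \ref{eRe}. The extra remarks you add about $J(fT_n(R)f)=fJ(T_n(R))f$ and transport along isomorphisms are fine but not a different route.
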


\begin{proof}
Assume that $T_n(R)$ is weakly $J$-quasipolar. Let $f$ be the unit
matrix with $(1,1)$ entry is $1$ and the other entries are $0$,
then $fT_n(R)f\cong R$.  By Theorem \ref{eRe},  $R$ is weakly
$J$-quasipolar. \end{proof}

The following example illustrates that the converse statement of
Corollary \ref{ust3} is not true in general.

\begin{ex}\label{yosum} If $R=\Bbb Z_3$, then $R$ is weakly $J$-quasipolar.   For $A$=$\begin{bmatrix}$
  $1     &  0      \\$

 $ 0      &  2$
$\end{bmatrix}$ $\in U(T_2(R))$, $A+I_2 \notin J(T_2(R))$ and
 $A-I_2  \notin J(T_2(R))$.
Therefore $T_2(R)$ is not weakly $J$-quasipolar.
\end{ex}

Our next endeavor is to find conditions under which an individual
matrix in $M_2(R)$ is weakly $J$-quasipolar.

\begin{lem}
Let $R$ be a ring. Then $A\in U(M_2(R))$ and $A$ is weakly
$J$-quasipolar if and only if $A-I_2\in J(M_2(R))$ or $A+I_2\in
J(M_2(R))$.
\end{lem}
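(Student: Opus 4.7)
The plan is to recognize this lemma as a direct analogue of Lemma \ref{uniqidempotent}, applied inside the ring $M_2(R)$. The forward implication is essentially just that lemma, while the reverse implication amounts to checking that $I_2$ itself serves as a weakly $J$-spectral idempotent for $A$.

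For the forward direction, suppose $A\in U(M_2(R))$ is weakly $J$-quasipolar, so there exists $P^2 = P \in comm^2(A)$ with $A+P \in J(M_2(R))$ or $A-P \in J(M_2(R))$. Applying Lemma \ref{uniqidempotent} with $R$ replaced by $M_2(R)$ and $u$ replaced by the unit $A$, we conclude that $P = I_2$. Hence $A - I_2 \in J(M_2(R))$ or $A + I_2 \in J(M_2(R))$.

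For the converse, suppose $A - I_2 \in J(M_2(R))$ (the other case is symmetric). Take $P = I_2$. Clearly $P^2 = P$, and since $I_2$ is central in $M_2(R)$ it lies in $comm^2(A)$ trivially. Also $A - P = A - I_2 \in J(M_2(R))$, so $A$ is weakly $J$-quasipolar. Finally, because $A = I_2 + (A - I_2)$ with $A - I_2 \in J(M_2(R))$, $A$ is a unit in $M_2(R)$.

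There is no real obstacle here; the only thing to be mindful of is that Lemma \ref{uniqidempotent} is stated for an arbitrary ring, so it legitimately applies to $M_2(R)$, which ensures that the spectral idempotent of a unit must itself be a unit idempotent, i.e., the identity.
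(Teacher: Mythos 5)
Your proof is correct and follows essentially the same route as the paper: the forward direction invokes Lemma \ref{uniqidempotent} in the ring $M_2(R)$ to force the weakly $J$-spectral idempotent of the unit $A$ to be $I_2$, and the converse takes $P=I_2$ (central, hence in $comm^2(A)$) and observes $A\in \pm I_2+J(M_2(R))\subseteq U(M_2(R))$. If anything, your write-up of the converse is more explicit than the paper's, which simply declares it clear.
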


\begin{proof}
Let $A$ be weakly $J$-quasipolar. Since $A\in U(M_2(R))$, weakly
$J$-spectral idempotent of $A$ is $I_2$. \\\textbf{Case 1:} If
$A+I_2\in
J(R)$, then it is clear from proof of \cite[Lemma 4.3]{CC3}.\\
\textbf{Case 2:} If $A-I_2\in J(R)$, then $A\in
1+J(M_2(R))\subseteq U(M_2(R))$.\\ The converse is clear.
\end{proof}

The following lemma is important to study especially in a matrix
ring.

\begin{lem}\label{altýinjr}
If $R$ is a weakly $J$-quasipolar ring, then $6\in J(R)$.
\end{lem}

\begin{proof}
Let $R$ be a weakly $J$-quasipolar ring, then there exists
$p^2=p\in comm^2(2)$ such that $2-p\in J(R)$ or $2+p\in J(R)$.
Assume that $2-p=j\in J(R)$, therefore $2-j=p$ and $(2-j)^2=2-j$.
Thus $2=j(3-j)\in J(R)$. As a consequence $6\in J(R)$. If
$2+p=j_1\in J(R)$, then $(j_1-2)^2=(j_1-2)$. So $6=j_1(5-j_1)\in
J(R)$.
\end{proof}

The following result is helpful to show a ring is not weakly
$J$-quasipolar.

\begin{ex}  Since $6\notin J(\Bbb Z_{15})=0$, by Lemma \ref{altýinjr},
$\Bbb Z_{15}$ is  not weakly $J$-quasipolar.
\end{ex}

The converse statement of Lemma \ref{altýinjr} is not true in
general, i.e., for a ring $R$, if  $6\in J(R)$, then $R$ need not
be weakly $J$-quasipolar.

\begin{ex} It is obvious that $6\in J(T_2(\Bbb Z_3))$.  By Example \ref{yosum}, the ring $T_2(\Bbb Z_3)$ is
not weakly $J$-quasipolar.
\end{ex}

Proposition \ref{2injr} shows that in case of $2\in J(R)$, weakly
$J$-quasipolar rings and $J$-quasipolar rings are the same. The
following example indicates that it does not hold in case of $6\in
J(R)$.

\begin{ex} The ring $\Bbb Z_9$ is weakly $J$-quasipolar and $6\in J(\Bbb
Z_9)$. Since there is not a $J$-spectral idempotent for $4$ such
that $4+p\in J(\Bbb Z_9)$, it is not $J$-quasipolar.
\end{ex}

\begin{lem}\label{aartýeksibes}
Let $R$ be a ring with $6\in J(R)$. If $a\in R$ is weakly
$J$-quasipolar, then $a+5$ or $a-5$ is weakly $J$-quasipolar.
\end{lem}

\begin{proof}
Let $a \in R$ be weakly $J$-quasipolar. Thus $a+p\in J(R)$ or
$a-p\in J(R)$ such that $p^2=p\in comm^2(a)$. Assume that $a+p\in
J(R)$ and $p^2=p\in comm^2(a)$. Since $6\in J(R)$,
$a-6+p=(a-5)-(1-p)\in J(R)$. So $a-5$ is weakly $J$-quasipolar. If
$a-p\in J(R)$ such that $p^2=p\in comm^2(a)$,
$a+6-p=(a+5)+(1-p)\in J(R)$.
\end{proof}

\begin{prop}
Let $R$ be a commutative ring with $6\in J(R)$ and $A\in M_2(R)$
such that $A\notin J(M_2(R))$. If both $detA$ and $trA$ in $J(R)$,
then $A$ is not weakly $J$-quasipolar.
\end{prop}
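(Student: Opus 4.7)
The plan is to argue by contradiction after reducing to the quotient $\bar R := R/J(R)$, using the standard identification $M_2(R)/J(M_2(R)) \cong M_2(\bar R)$ that follows from $J(M_2(R)) = M_2(J(R))$. Suppose, toward a contradiction, that $A$ is weakly $J$-quasipolar; then there exists $P^2 = P \in comm^2(A)$ with either $A+P \in J(M_2(R))$ or $A-P \in J(M_2(R))$. Reducing modulo $J(M_2(R))$ yields $\bar A = -\bar P$ or $\bar A = \bar P$ in $M_2(\bar R)$, so in either case $\bar A^2 = \bar P^2 = \bar P = \pm \bar A$.

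The second step is to feed the trace and determinant hypotheses into the Cayley--Hamilton theorem in $M_2(\bar R)$. Since $R$ is commutative, so is $\bar R$, and the hypotheses $detA, trA \in J(R)$ give $det\bar A = 0$ and $tr\bar A = 0$ in $\bar R$. Cayley--Hamilton then reads
\[
\bar A^2 - (tr\bar A)\,\bar A + (det\bar A)\,I_2 = 0,
\]
so $\bar A^2 = 0$ in $M_2(\bar R)$. Combining this with the previous display gives $\pm \bar A = \bar A^2 = 0$, whence $\bar A = 0$; equivalently, $A \in J(M_2(R))$, contradicting the standing hypothesis $A \notin J(M_2(R))$. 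Therefore $A$ cannot be weakly $J$-quasipolar.

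The proof is essentially a short computation once the reduction to $M_2(\bar R)$ is in place, so I do not anticipate a serious obstacle; the only points to verify carefully are that $\bar P$ lands as a genuine idempotent in the quotient (which uses $J(M_2(R)) = M_2(J(R))$) and that commutativity of the scalar ring $\bar R$ licenses Cayley--Hamilton. It is worth flagging that the hypothesis $6 \in J(R)$ is not visibly used in this streamlined argument; it is presumably carried along as a standing assumption of the section or would enter a more computational alternative proof relying on the explicit classification of $2 \times 2$ idempotents given earlier in the section.
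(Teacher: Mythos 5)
Your proof is correct, but it takes a genuinely different route from the paper's. The paper stays inside $M_2(R)$ and leans on the hypothesis $6\in J(R)$: it first replaces $A$ by $A-5I_2$ (or $A+5I_2$) via the lemma stating that when $6\in J(R)$ weak $J$-quasipolarity of $a$ passes to $a\mp 5$, then computes that $det(A\mp 5I_2)=detA\mp 5\,trA+25$ lies in $\pm 1+J(R)\subseteq U(R)$ because $trA, detA, 24\in J(R)$, so $A\mp 5I_2$ is a unit and by Lemma \ref{uniqidempotent} its weakly $J$-spectral idempotent is forced to be $I_2$; the two resulting conditions $A\mp 5I_2\pm I_2\in J(M_2(R))$ are then each ruled out using $A\notin J(M_2(R))$ and the trace hypothesis (which yields $8\in J(R)$, hence $2\in J(R)$, hence $4\in J(R)$). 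Your argument instead reduces modulo $J(M_2(R))=M_2(J(R))$ and invokes Cayley--Hamilton over the commutative ring $R/J(R)$: from $\bar A=\pm\bar P$ you get $\bar A^2=\bar P=\pm\bar A$, while $tr\bar A=det\bar A=0$ gives $\bar A^2=0$, so $\bar A=0$, contradicting $A\notin J(M_2(R))$. Both proofs are valid; yours is shorter and, as you rightly flag, never uses $6\in J(R)$, so it in fact establishes the proposition for an arbitrary commutative ring, whereas the paper's hypothesis $6\in J(R)$ is an artifact of its method (the shift by $5$ and the unit trick) rather than a genuine necessity.
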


\begin{proof} If $A$ is weakly $J$-quasipolar, then $A-5$ or $A+5$ weakly
$J$-quasipolar by Lemma \ref{aartýeksibes}. Note that
$det(A-5)=detA-5(trA+5)\in U(R)$. Hence weakly $J$-spectral
idempotent of $A-5$ is $I_2$ by Lemma \ref{uniqidempotent}. So
$A-5-I_2\in J(M_2(R))$ or $A-5+I_2\in J(M_2(R))$. If $A-5-I_2\in
J(M_2(R))$, then $A$ is weakly $J$-quasipolar, which contradicts
the assumption $A\notin J(M_2(R))$. In other condition, let
$A-5+I_2\in J(M_2(R))$ and so $A-4\in J(M_2(R))$. Therefore
$a_{11}-4,a_{22}-4\in J(R)$, $a_{11}+a_{22}-8=trA-8\in J(R)$.
Since $trA\in J(R)$, so $8\in J(R)$ and $8-6=2\in J(R)$. Thus
$A-4+4\in J(M_2(R))$ is a contradiction. As a consequence $A$ is
not weakly $J$-quasipolar. Also in case of $A+5\in J(M_2(R))$,
proof is similar. Finally $A$ is not weakly $J$-quasipolar.
\end{proof}

\begin{lem}
Let $R$ be a commutative local ring. Then $A= \left[%
\begin{array}{cc}
  j & 0 \\
  0 & u \\
\end{array}%
\right]$ is weakly $J$-quasipolar in $M_2(R)$ if and only if one
of the following holds.
\begin{enumerate}
\item $A \in J(M_2(R))$.
\item $A+I_2 \in J(M_2(R))$.
\item $A-I_2 \in J(M_2(R))$.
\item $u \in -1+J(R)$ and $j\in J(R)$.
\item $u\in J(R)$ and $j\in -1+J(R)$.
\item $u\in J(R)$ and $j\in 1+J(R)$.
\item $u \in 1+J(R)$ and $j\in J(R)$.
\end{enumerate}
\end{lem}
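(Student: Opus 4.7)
The plan is to exploit that $R$, being commutative local, decomposes as the disjoint union $R = J(R) \sqcup U(R)$, and split the argument into four cases according to where $j$ and $u$ lie. The two ``pure'' cases are essentially immediate: if $j,u \in J(R)$ then $A \in J(M_2(R))$, giving condition (1); if $j,u \in U(R)$ then $A \in U(M_2(R))$, so Lemma \ref{uniqidempotent} applied inside $M_2(R)$ forces the weakly $J$-spectral idempotent of $A$ to be $I_2$, whence $A + I_2 \in J(M_2(R))$ or $A - I_2 \in J(M_2(R))$---these are (2) and (3).

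The substance lies in the mixed cases. Suppose first that $j \in J(R)$ and $u \in U(R)$ (the case $j \in U(R), u \in J(R)$ is symmetric). Then $u - j \in U(R)$, and a short calculation with $B = (b_{ik}) \in comm(A)$ yields $b_{12}(u-j) = 0 = b_{21}(u-j)$, so $b_{12} = b_{21} = 0$; that is, $comm(A)$ is precisely the set of diagonal matrices over $R$. A second application of the same idea, testing commutation with the specific element $\mathrm{diag}(1,0)$, pins down $comm^2(A)$ as the diagonal matrices as well. Since $R$ is local its only idempotents are $0$ and $1$, so the idempotents of $comm^2(A)$ are exactly $0$, $I_2$, $\begin{bmatrix}1&0\\0&0\end{bmatrix}$ and $\begin{bmatrix}0&0\\0&1\end{bmatrix}$.

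It remains to test each of these four idempotents $P$ against both $A+P \in J(M_2(R))$ and $A-P \in J(M_2(R))$. Any candidate whose diagonal entries would force $1 \in J(R)$ (for instance, $j+1 \in J(R)$ together with $j \in J(R)$) or $u \in J(R)$ is discarded. Only two survive: $P = \begin{bmatrix}0&0\\0&1\end{bmatrix}$ paired with $A+P \in J$ (which yields $u \in -1+J(R)$, condition (4)) or with $A-P \in J$ (giving $u \in 1+J(R)$, condition (7)). The symmetric mixed case produces (5) and (6) via $P = \begin{bmatrix}1&0\\0&0\end{bmatrix}$. The converse direction is then a direct verification: for each of (1)--(7), the prescribed diagonal idempotent lies in $comm^2(A)$ and witnesses weak $J$-quasipolarity. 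The main obstacle is the bookkeeping in this enumeration: one must carefully eliminate every spurious $(P,\pm)$ pair by repeated use of $1 \notin J(R)$, rather than accidentally discarding a genuine subcase.
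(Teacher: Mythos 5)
Your proof is correct, and it reaches the same essential bottleneck as the paper --- that the weakly $J$-spectral idempotent of a diagonal matrix must itself be diagonal, hence one of $0$, $I_2$, $\mathrm{diag}(1,0)$, $\mathrm{diag}(0,1)$ over a local ring --- but it gets there by a different decomposition and is more self-contained. The paper splits on the sign of the idempotent: the case $A+P\in J(M_2(R))$ is disposed of by citing Lemma 4.7 of \cite{CC3} (which yields (1), (2), (4), (5)), and only the case $A-P\in J(M_2(R))$ is argued directly, using the neat observation that $F=\mathrm{diag}(1,0)\in comm(A)$ for \emph{any} diagonal $A$, so $P\in comm^2(A)$ forces $FP=PF$ and hence $P$ diagonal, with no invertibility hypothesis on $j-u$. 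You instead split on whether each of $j,u$ lies in $J(R)$ or $U(R)$, dispatch the two pure cases via $A\in J(M_2(R))$ or Lemma \ref{uniqidempotent}, and in the mixed cases compute $comm(A)$ and $comm^2(A)$ outright --- which is exactly where $u-j\in U(R)$ is genuinely needed, and you correctly confine that computation to the mixed cases. Both arguments then reduce to the same enumeration of four idempotents against two signs, and your eliminations and survivors match the paper's conditions (4)--(7) exactly. What your route buys is independence from the external reference at the cost of a longer case analysis; what the paper's buys is brevity and a uniform "$P$ is diagonal" step, at the cost of importing half the forward direction from \cite{CC3}.
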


\begin{proof}
 Let $A$ be weakly $J$-quasipolar. Then, there exists $P^2=P \in
comm^2(A)$ such that $A+P \in J(M_2(R))$ or $A-P \in J(M_2(R))$.
If $A+P \in J(M_2(R))$, then $(1), (2), (4),(5)$ hold by
\cite[Lemma 4.7]{CC3}. Assume that $A-P \in J(M_2(R))$. If $P=0$
or $P=I_2$ it is clear. Let $P\neq 0$ and $P \neq I_2$. By
Proposition \ref{ýdempotent}, $P =
\begin{bmatrix}a&b\\c&1-a\end{bmatrix}$ where $bc = a-a^2$. Since
$F=\begin{bmatrix}1&0\\0&0\end{bmatrix} \in comm(A)$ and $P \in
comm^2(A)$, $FP=PF$. Then, $b=c=0$. Thus, $P =
\begin{bmatrix}1&0\\0&0\end{bmatrix}$ or $P =
\begin{bmatrix}0&0\\0&1\end{bmatrix}$. Since $A-P \in J(M_2(R))$,
$u\in J(R)$ and $j\in 1+J(R)$ or $u \in 1+J(R)$ and $j\in J(R)$.\\
Conversely, if $A \in J(M_2(R))$ or $A+I_2 \in J(M_2(R))$ or
$A-I_2 \in J(M_2(R))$, then $A$ is weakly $J$-quasipolar. If $u
\in -1+J(R)$ and $j\in J(R)$ or $u\in J(R)$ and $j\in -1+J(R)$,
then it follows from \cite[Lemma 4.7]{CC3}. Suppose that $u\in
J(R)$ and $j\in 1+J(R)$. Let $P =
\begin{bmatrix}1&0\\0&0\end{bmatrix}$. Then $P^2=P$ and $A-P \in
J(M_2(R))$. To show that $P^2=P \in comm^2(A)$, let $B=
\begin{bmatrix}x&y\\z&t\end{bmatrix} \in comm(A)$. Hence $y=z=0$
and so $PB=BP$. Thus $A$ is weakly $J$-quasipolar. If $u\in J(R)$
and $j\in 1+J(R)$, similarly $A$ is weakly $J$-quasipolar.
\end{proof}

\begin{prop}
Let $R$ be a commutative local ring with $6\in J(R)$ and let $A\in
M_2(R)$ such that $A\notin J(M_2(R))$ and $detA\in J(R)$. Then $A$
is weakly $J$-quasipolar if and only if $x^2-(trA)x+detA=0$ has a
root in $J(R)$ and a root in $\mp 1+J(R)$.
\end{prop}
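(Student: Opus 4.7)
The plan combines the Cayley--Hamilton theorem with the Peirce decomposition induced by a commuting idempotent.

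\emph{Necessity.} Starting from a weakly $J$-spectral idempotent $P$ of $A$, so $P^2=P\in comm^2(A)$ and $A-\varepsilon P\in J(M_2(R))$ for some $\varepsilon\in\{+1,-1\}$, I would first rule out the trivial cases: $P=0$ would force $A\in J(M_2(R))$, contradicting the hypothesis, while $P=I_2$ would force $A\in U(M_2(R))$ and hence $detA\in U(R)$, contradicting $detA\in J(R)$. Thus $P$ is a nontrivial idempotent of $M_2(R)$, and since $R$ is a commutative local ring, $P$ is conjugate via some $U\in GL_2(R)$ to $E:=\begin{bmatrix}1&0\\0&0\end{bmatrix}$; this can be read off from Proposition 3.1 by noting that the relation $bc=a(1-a)$ together with the locality of $R$ makes one of $a,1-a$ a unit and permits a diagonalization of $P$ by elementary operations. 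Since $PA=AP$, the conjugate $B:=UAU^{-1}$ commutes with $E$ and is therefore diagonal, $B=\begin{bmatrix}b_1&0\\0&b_2\end{bmatrix}$. The relation $A-\varepsilon P\in J(M_2(R))$ becomes $B-\varepsilon E\in M_2(J(R))$, i.e.\ $b_1\in\varepsilon+J(R)$ and $b_2\in J(R)$. As the characteristic polynomial is a similarity invariant, $b_1$ and $b_2$ are the two roots of $x^2-(trA)x+detA$, exhibiting one root in $\mp 1+J(R)$ and one in $J(R)$.

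\emph{Sufficiency.} Suppose $x^2-(trA)x+detA=(x-\alpha)(x-\beta)$ with $\alpha\in J(R)$ and $\beta\in\varepsilon+J(R)$ for some $\varepsilon\in\{+1,-1\}$, so that $\beta-\alpha\in\varepsilon+J(R)\subseteq U(R)$. I would set $P:=(\beta-\alpha)^{-1}(A-\alpha I_2)$ and verify three points: (i) Cayley--Hamilton gives $(A-\alpha I_2)(A-\beta I_2)=0$, and rewriting $A-\beta I_2=(A-\alpha I_2)-(\beta-\alpha)I_2$ yields $(A-\alpha I_2)^2=(\beta-\alpha)(A-\alpha I_2)$, hence $P^2=P$; (ii) $P$ is a polynomial in $A$ with coefficients in the center $R$, so $P\in comm^2(A)$ automatically; (iii) writing $(\beta-\alpha)^{-1}=\varepsilon+k$ with $k\in J(R)$, a direct expansion gives $A-\varepsilon P=\alpha I_2-\varepsilon k(A-\alpha I_2)$, and each summand lies in $M_2(J(R))=J(M_2(R))$. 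Thus $A-\varepsilon P\in J(M_2(R))$, so $A$ is weakly $J$-quasipolar.

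The main obstacle is the clean invocation of the diagonalization of the nontrivial idempotent $P$ over the local ring $R$ in the necessity direction; once that Peirce decomposition is set up, both implications reduce to direct computations with Cayley--Hamilton and coset chasing in $J(R)$.
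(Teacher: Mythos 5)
Your proof is correct, and it is in fact more informative than the paper's, which consists solely of the remark that ``the proof is similar to \cite[Proposition 4.8]{CC3}''; you have supplied a complete, self-contained argument along the same lines as that reference (Peirce decomposition of the weakly $J$-spectral idempotent for necessity, Cayley--Hamilton for sufficiency), with the only new ingredient being the sign $\varepsilon$ to handle the $A-P$ versus $A+P$ cases. Two small points are worth tightening. First, in the sufficiency direction you assume the factorization $x^2-(trA)x+\det A=(x-\alpha)(x-\beta)$, whereas the hypothesis only gives two roots; this is harmless but deserves a line: if $\alpha$ is a root then $x^2-(trA)x+\det A=(x-\alpha)(x-(trA-\alpha))$, and since $\beta$ is also a root, $(\beta-\alpha)(\beta-(trA-\alpha))=0$ with $\beta-\alpha\in\pm 1+J(R)\subseteq U(R)$, forcing $\beta=trA-\alpha$. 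Second, the diagonalization of the nontrivial idempotent $P$ over the local ring should be justified a bit more explicitly than by waving at Proposition 3.1 --- the clean statement is that $R^2=PR^2\oplus(1-P)R^2$ with both summands free (projectives over a local ring are free) of rank $1$ (compute ranks modulo the maximal ideal, noting that an idempotent in $J(M_2(R))$ is $0$), so $P$ is conjugate to $\mathrm{diag}(1,0)$. Finally, observe that your argument never uses the hypothesis $6\in J(R)$; that condition is a necessary feature of weakly $J$-quasipolar \emph{rings} (Lemma 3.9 of the paper) but plays no role in characterizing when a single matrix is weakly $J$-quasipolar, so its presence in the statement is redundant rather than a gap in your proof.
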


\begin{proof}
The proof is similar to proof of \cite[Proposition 4.8]{CC3}.
\end{proof}

\begin{thm}
Let $R$ be a commutative local ring with $6\in J(R)$. The matrix
$A\in M_2(R)$ is weakly $J$-quasipolar if and only if one of the
following holds:
\begin{enumerate}
\item Either $A$ or $A-I_2$ or $A+I_2$ is in $J(M_2(R))$.
\item The equation $x^2-(trA)x+detA=0$ has a root in $J(R)$ and a
root in $\mp 1+J(R)$.
\end{enumerate}
\end{thm}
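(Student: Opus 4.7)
The plan is to prove the equivalence in the standard two directions, and to reduce the forward direction to a case split driven by whether $\det A$ is a unit or lies in $J(R)$, which is a clean dichotomy because $R$ is commutative local.

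For the sufficiency direction, each clause of (1) gives weakly $J$-quasipolarity almost immediately: if $A\in J(M_2(R))$ take $P=0$; if $A-I_2\in J(M_2(R))$ or $A+I_2\in J(M_2(R))$ take $P=I_2$, which lies in $\mathrm{comm}^2(A)$ trivially. For clause (2), I would invoke the preceding proposition directly, which constructs the weakly $J$-spectral idempotent out of a factorization of $x^2-(\mathrm{tr}A)x+\det A$ over $R$ using the two roots located in $J(R)$ and in $\mp 1+J(R)$.

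For necessity, assume $A$ is weakly $J$-quasipolar. Because $R$ is commutative local, $A\in U(M_2(R))$ iff $\det A\in U(R)$, and otherwise $\det A\in J(R)$. In the first case, Lemma \ref{uniqidempotent} applied in $M_2(R)$ forces the weakly $J$-spectral idempotent of $A$ to be $I_2$, so $A-I_2\in J(M_2(R))$ or $A+I_2\in J(M_2(R))$, placing us inside clause (1). In the second case, either $A\in J(M_2(R))$ (again landing in clause (1)) or $A\notin J(M_2(R))$, in which case $\det A\in J(R)$ is precisely the hypothesis of the preceding proposition and we conclude that $x^2-(\mathrm{tr}A)x+\det A=0$ has one root in $J(R)$ and one root in $\mp 1+J(R)$, i.e.\ clause (2).

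The main obstacle is essentially already dispatched by the two preceding results (the characterization of weakly $J$-quasipolar diagonal matrices over a local ring and the polynomial criterion when $\det A\in J(R)$), so the proof of the theorem itself is a matter of assembling cases. The only subtlety worth guarding against is to check that no case is lost: the hypothesis $6\in J(R)$ is what made the preceding proposition available (it feeds into Lemma \ref{aartieksibes} that is used in its proof), and once that is in hand the trichotomy ``$A\in U(M_2(R))$'', ``$A\in J(M_2(R))$'', ``$\det A\in J(R)$ with $A\notin J(M_2(R))$'' is exhaustive, so the theorem follows.
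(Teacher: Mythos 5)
Your proof is correct and follows essentially the same route as the paper, which simply defers to the analogous argument for Theorem 4.9 of \cite{CC3}: the unit case is settled by the uniqueness of the weakly $J$-spectral idempotent of a unit (forcing $P=I_2$), and the remaining case by the preceding proposition, exactly as you assemble it. The only detail worth writing out explicitly is that clause (2) itself forces $\det A\in J(R)$ (it is the product of the two roots, one of which lies in $J(R)$) and $trA\in \mp 1+J(R)$, hence $A\notin J(M_2(R))$, so the preceding proposition really is applicable in the sufficiency direction as well.
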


\begin{proof}
The proof is similar to proof of \cite[Theorem 4.9]{CC3}.
\end{proof}

\begin{lem}\cite[Lemma 1.5]{Cvd}
Let $R$ be a commutative domain. Then $A\in M_2(R)$ is an
idempotent if and only if either $A = 0$, or $A = I_2$, or $A =
\begin{bmatrix}a&b\\c&1-a\end{bmatrix}$ where $bc = a-a^2$.
\end{lem}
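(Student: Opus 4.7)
The plan is to set $A=\begin{bmatrix}a&b\\ c&d\end{bmatrix}$ and expand $A^2=A$ entry by entry. This produces the four scalar equations
\begin{equation*}
a^2+bc=a,\qquad b(a+d-1)=0,\qquad c(a+d-1)=0,\qquad bc+d^2=d.
\end{equation*}
Subtracting the first and fourth gives $(a-d)(a+d-1)=0$. Since $R$ is a commutative domain, this forces either $a+d=1$ or $a=d$, and the whole argument splits along this dichotomy.

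In the first case $d=1-a$, the diagonal relation $a^2+bc=a$ becomes $bc=a-a^2$, so $A$ has the third form stated. In the second case $a=d$, I would first consider the subcase $a+d-1\neq 0$: then the off-diagonal equations force $b=c=0$, and the first equation degenerates to $a(a-1)=0$, which in the domain $R$ gives $a=0$ or $a=1$, producing $A=0$ or $A=I_2$. If instead $a=d$ and $a+d-1=0$ both hold, then $a=d$ with $a+d=1$, which is just a special instance of the first case and already fits the third form $\begin{bmatrix}a&b\\ c&1-a\end{bmatrix}$ with $bc=a-a^2$.

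For the converse, the three listed forms are all checked by a direct squaring computation; the only nontrivial verification is that $\begin{bmatrix}a&b\\ c&1-a\end{bmatrix}^2=\begin{bmatrix}a^2+bc&ab+b(1-a)\\ ca+(1-a)c&bc+(1-a)^2\end{bmatrix}$ reduces to the original matrix once $bc=a-a^2$ is substituted, and this is immediate.

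The only genuinely delicate point is the use of the domain hypothesis: it is needed exactly to conclude $(a-d)(a+d-1)=0\Rightarrow a=d$ or $a+d=1$, and separately to conclude $a(a-1)=0\Rightarrow a\in\{0,1\}$ in the second case. Both conclusions can fail over rings with zero divisors, which is why the statement is restricted to domains; outside of that, the proof is just bookkeeping.
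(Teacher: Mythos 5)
Your proof is correct: expanding $A^2=A$, deriving $(a-d)(a+d-1)=0$, and splitting on the domain hypothesis is exactly the standard argument, and the paper itself offers no proof here --- it simply quotes the statement as Lemma 1.5 of the cited work of Chen, Yang and Zhou. Your write-up supplies the routine verification that the citation suppresses, and your closing remark correctly identifies the two places where the domain hypothesis is genuinely used.
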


\begin{prop}
$A\in M_2(\Bbb Z)$ weakly $J$-quasipolar if and only if one of the
following hold.\begin{enumerate}
\item $A$=$\begin{bmatrix}$
  $-a     &  b      \\$

 $ c      &  a-1$
$\end{bmatrix}$ such that $bc=a-a^2$.
\item $A$ is idempotent.
\item
$A$=$\begin{bmatrix}$
  $-a     &  -b      \\$

 $ -c      &  a-1$
 $\end{bmatrix}$ such that $bc=a-a^2$.

 \end{enumerate}
\end{prop}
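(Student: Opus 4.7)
The plan is to exploit the fact that the ground ring is $\mathbb{Z}$, where the Jacobson radical collapses everything to a pointwise equation. Since $J(\mathbb{Z}) = 0$ we have $J(M_2(\mathbb{Z})) = M_2(J(\mathbb{Z})) = 0$. Consequently, the weakly $J$-quasipolar condition for $A \in M_2(\mathbb{Z})$ reduces to: there exists an idempotent $P \in comm^2(A)$ with $A + P = 0$ or $A - P = 0$, i.e., $A = \pm P$.

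The immediate observation is that the second-commutant requirement $P \in comm^2(A)$ becomes automatic, because $P = \pm A$ and $A$ certainly lies in $comm^2(A)$. So the condition is really just: $A^2 = A$, or $A^2 = -A$ (equivalently, $-A$ is an idempotent). The first possibility gives case (2) verbatim. For the second, I would invoke the cited Cvetko--Vah lemma applied to $-A$: either $-A = 0$ (giving $A = 0$, already an idempotent, so case (2)), $-A = I_2$, or $-A = \begin{bmatrix} a & b \\ c & 1-a \end{bmatrix}$ with $bc = a - a^2$. Translating the last possibility by a sign change yields $A = \begin{bmatrix} -a & -b \\ -c & a-1 \end{bmatrix}$ with $bc = a - a^2$, which is case (3); reparameterizing $(b,c) \mapsto (-b,-c)$ preserves the condition $bc = a - a^2$ and yields case (1). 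Thus cases (1) and (3) are presented as two equivalent parameterizations of the set $\{A : A^2 = -A\}$ of the ``nontrivial'' type.

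For the converse direction, the verification is immediate: in case (2) take $P = A$, and in cases (1)/(3) take $P = -A$. In each situation $P$ is idempotent by hypothesis, lies in $comm^2(A)$ trivially, and $A \mp P = 0 \in J(M_2(\mathbb{Z}))$, confirming weak $J$-quasipolarity.

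There is no serious technical obstacle; the entire proof is driven by the vanishing of $J(M_2(\mathbb{Z}))$. The only subtlety worth flagging is that the classification of idempotents via the Cvetko--Vah lemma separates out the ``trivial'' idempotents $0$ and $I_2$; in the $A = -P$ branch these correspond respectively to $A = 0$ (absorbed into case (2)) and $A = -I_2$, so I would pause to check that these edge cases are accounted for in the final list (either by subsumption under the stated parameterizations or by noting them explicitly).
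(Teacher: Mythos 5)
Your proof is correct and follows essentially the same route as the paper: both reduce everything to $J(M_2(\Bbb Z))=0$, so that the condition becomes $A=\pm P$ for an idempotent $P$ (with $P\in comm^2(A)$ automatic), and then classify the idempotents of $M_2(\Bbb Z)$ via the cited lemma. The edge case you flagged is a genuine one: $A=-I_2$ is weakly $J$-quasipolar (take $P=I_2$, so $A+P=0$), yet it is not an idempotent and does not fit the parameterizations in (1) or (3) (which would force $a=1$ and $a=0$ simultaneously), so the proposition's list as stated omits it --- a defect of the statement itself rather than of your argument.
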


\begin{proof}
Assume that $A$ is weakly $J$-quasipolar. Since $J(M_2(\Bbb
Z))=0$, proof is clear. Conversely, If $A$=$\begin{bmatrix}$
  $a     &  b      \\$

 $ c      &  1-a$
$\end{bmatrix}$ and $bc=a-a^2$, then $A$ is idempotent. So $A$ is
weakly $J$-quasipolar. Let $A$=$\begin{bmatrix}$
  $-a     &  b      \\$

 $ c      &  a-1$
$\end{bmatrix}$. If idempotent is chosen as $P$=$\begin{bmatrix}$
  $a     &  -b      \\$

 $ -c      &  1-a$
$\end{bmatrix}$, then it is clear. Lately, let
$A$=$\begin{bmatrix}$
  $-a     &  -b      \\$

 $ -c      &  a-1$
$\end{bmatrix}$. The idempotent is chosen as $P$=$\begin{bmatrix}$
  $a     &  b      \\$

 $ c      &  1-a$
$\end{bmatrix}$, it is clear.
\end{proof}


\begin{thebibliography}{99}

\bibitem{Ch}  H. Chen, {\it On strongly $J$-clean rings,} Comm. Algebra,
38(2010), 3790-3804.
\bibitem{Ch2} H. Chen, {\it Rings Related to Stable Range Conditions}, Series in Algebra 11, World
Scientific, Hackensack, NJ, 2011.
\bibitem{Cvd} J. Chen, X. Yang and Y. Zhou, {\it On strongly clean matrix
and triangular matrix rings,} Comm. Algebra, 34(2006), 3659-3674.
\bibitem{CC} J. Cui and J. Chen, {\it Quasipolar triangular matrix rings over local rings}, Comm.
Algebra, 40(2012), 784-794.
\bibitem{CC1} J. Cui and J. Chen, {\it  Pseudopolar matrix rings over local rings},
J. Algebra Appl., 13(2014), 1350109.

\bibitem{CC3} J. Cui and J. Chen, {\it A class of quasipolar
rings}, Comm. Algebra, 40(2012), 4471-4482.
\bibitem{Jac}  N. Jacobson, {\it Structure theory for algebraic algebras of bounded degree,}
Ann. of Math., 46(1945), 695-707.
\bibitem{KP} J. J. Koliha and P. Patricio, {\it Elements of rings with equal spectral
idempotents}, J. Aust. Math. Soc., 72(2002), 137-152.
\bibitem{Ni} W. K. Nicholson, {\it Strongly clean rings and fitting's
lemma}, Comm. Algebra, (27)(8)(1999), 3583-3592.
\bibitem{NZ} W.K. Nicholson and Y. Zhou, {\it Rings in which elements are uniquely the sum of an
idempotent and a unit,} Glasgow Math. J., 46(2004), 227-236.
\bibitem{Uohh}  B. Ungor, O. Gurgun, S. Halicioglu and A. Harmanci,  {\it Feckly reduced rings,}
Hacet. J. Math. Stat., (44)(2)(2015), 375-384.
\bibitem{YC} Z. Ying and J. Chen, {\it On quasipolar rings}, Algebra
Colloq., (19)(4)(2012), 683-692.
\end{thebibliography}
\end{document}